\newtheorem{thm}{Theorem}
\newtheorem{lem}{Lemma}
\theoremstyle{remark}
\renewcommand{\Re}{\operatorname{Re}}
\renewcommand{\Im}{\operatorname{Im}}
\def\statement{\begin{minipage}[t]{.75\textwidth}
       NOTICE: This is the author's version of a work that was published in Journal of
       Non--Newtonian Fluid Mechanics 223 (2015) 209--220,  \href{http://dx.doi.org/10.1016/j.jnnfm.2015.07.004}{DOI:10.1016/j.jnnfm.2015.07.004}.
       \end{minipage}}
\def\ps@pprintTitle{%
     \let\@oddhead\@empty
     \let\@evenhead\@empty
     \def\@oddfoot{\footnotesize\itshape
       \statement\hfill\today}%
     \let\@evenfoot\@oddfoot}
\journal{}
\begin{document}
\aicescoverpage
\begin{frontmatter}

\title{The fully-implicit log-conformation formulation and its application to three-dimensional flows}
\author{Philipp Knechtges\corref{cor1}}
\ead{knechtges@cats.rwth-aachen.de}

\cortext[cor1]{Corresponding author}

\address{Chair for Computational Analysis of Technical Systems (CATS), RWTH Aachen University, 52056 Aachen, Germany\\
Center for Computational Engineering Science (CCES), RWTH Aachen University, 52056 Aachen, Germany}

\begin{abstract}
The stable and efficient numerical simulation of viscoelastic flows has been a constant struggle due to the High Weissenberg Number Problem.
While the stability for macroscopic descriptions could be greatly enhanced by the log-conformation method as proposed by Fattal and Kupferman, the application
of the efficient Newton--Raphson algorithm to the full monolithic system of governing equations, consisting of the log-conformation equations
and the Navier--Stokes equations, has always posed a problem. In particular, it is the formulation of the constitutive equations by means of the spectral
decomposition that hinders the application of further analytical tools.
Therefore, up to now, a fully monolithic approach could only be achieved in two dimensions, as, e.g., recently shown in
[P. Knechtges, M. Behr, S. Elgeti, Fully-implicit log-conformation formulation of constitutive laws,
J. Non-Newtonian Fluid Mech. 214 (2014) 78--87].

The aim of this paper is to find a generalization of the previously made considerations to three dimensions, such that a monolithic
Newton--Raphson solver based on the log-conformation formulation can be implemented also in this case.
The underlying idea is analogous to the two-dimensional case, to replace the eigenvalue decomposition in
the constitutive equation by an analytically more ``well-behaved'' term and to rely on the eigenvalue decomposition only for the actual computation.
Furthermore, in order to demonstrate the practicality of the proposed method, numerical results of the newly derived formulation are presented
in the case of the sedimenting sphere and ellipsoid benchmarks for the Oldroyd-B and Giesekus models.
It is found that the expected quadratic convergence of Newton's method can be achieved.
\end{abstract}

\begin{keyword}
Log-conformation\sep Oldroyd-B model\sep Giesekus model\sep Finite element method
\MSC[2010] 76A10\sep 76M10
\end{keyword}
\end{frontmatter}

\section{Introduction}
Viscoelastic flows are ubiquitous in modern industrial applications. They are essential for the correct description
of the flow properties of blood, as well as polymer melts, which makes a good understanding of the used
models necessary for applications ranging from the design of blood pumps \cite{Marsden2014} to the layout of extrusion dies in plastics
manufacturing \cite{Pauli2013}.

Considering the demands stemming from the non-linear behavior of most of the used models and,
at the same time, the possibilities given through the advent of the computer age, it has become more and more
common not to base the model analysis solely on pure analytic grounds, but also to perform numerical simulations, which can be
applied to almost arbitrary geometries and domains. In the past, the macroscopic descriptions have been quite dominant, whereas
micro-macro simulations based on stochastic differential equations are now gaining importance \cite{Owens2002,Griebel2014}.
Although the latter offer a greater flexibility with respect to the modeling of the underlying
molecular dynamics, the former are still quite popular due to their lower computational cost.
Since this is important for the application of the numerical methods to complex geometries, this paper seeks
a description in the macroscopic framework. More specifically, we will consider the Oldroyd-B \cite{Oldroyd1950}
and the Giesekus model \cite{Giesekus1982}. The applicability of our methods, however, is not limited to these two models.

Simultaneously to the advent of numerical methods in the simulation of viscoelastic models, the High Weissenberg Number
Problem arose \cite{Keunings1986,Owens2002}.
With the Weissenberg number being a dimensionless constant that weights the contribution of the viscoelastic equations
to the description of the full system, this abstract term expresses the empirical fact that, with increasing Weissenberg number,
numerical simulations tend to fail. In fact, the range of attainable Weissenberg numbers turned out to be quite often lower than what was
measured in experiments, thus reducing the predictive power of simulations.

The most recent and quite successful approaches tackling the High Weissenberg Number Problem are the log-conformation methods,
first considered in \cite{Fattal2004} in order to better resolve exponential stress-boundary layers in regions of high strain.
Although they do not solve the High Weissenberg Number Problem completely,
they address the subproblem that numerical simulations do not necessarily preserve the positive-definiteness
of the conformation tensor; a property always fulfilled by the undiscretized equations \cite{Hulsen1990}.
The latter was found to be crucial for a numerical simulation not to fail. The underlying idea of the log-conformation methods
is as simple as it is powerful:
The so far primal degree of freedom --- the conformation tensor $\bm{\sigma}$ --- is replaced by its logarithm $\bm{\Psi}$.
Hence, $\bm{\sigma}$ is obtained by means of the matrix-exponential function $\exp\bm{\Psi}$, which automatically ensures
that $\bm{\sigma}$ remains positive-definite.

This, however, comes at the cost of finding a suitable replacement for the corresponding constitutive equation.
The way the original method \cite{Fattal2004} pursues is rather unusual, compared to other partial differential
equations, in the requirement of
an eigenvalue decomposition of $\bm{\Psi}$.
In particular, it is this spectral decomposition that hinders the direct
application of the Newton--Raphson algorithm to the full set of partial differential equations.
More specifically, the Newton--Raphson method involves a determination
of derivatives with respect to the $\bm{\Psi}$ degrees of freedom, including the derivatives of the eigenvalues and eigenvectors.
Nevertheless, considering the derivatives of eigenvectors, it is known that they become singular in the case of degenerate eigenvalues due to
the ambiguity in the eigenvectors.
As a remedy for this and for the difficulty of taking the derivative of the matrix-exponential function, first attempts resorted to
the approximation of the Jacobian matrix by difference quotients \cite{Coronado2007,Damanik2010}.

Even though a first analysis was conducted for the two-dimensional Leonov model in \cite{Kwon2004},
it was not until the work in \cite{Knechtges2014} and \cite{Saramito2014} that the Jacobian matrix was derived by
pure analytic means in two dimensions for a broader class of models.
As a continuation of these earlier works, this paper is devoted to a generalization to arbitrary dimensions,
along which we will also bridge the gap between the two expositions in \cite{Knechtges2014} and \cite{Saramito2014}.

In particular, we will not only discuss the derivation of a new constitutive equation in the first section, but also describe
the numerical implications in the case of an implementation into an existing Galerkin/Least-Squares (GLS) Navier--Stokes solver
in the succeeding section. The results of this solver are subsequently used in
Section~\ref{sec:benchmark} to study the falling sphere benchmark, where a sphere of radius $R$ sediments along the centerline of
a tube of radius $2R$. In order to demonstrate the applicability to truly three-dimensional flows, a modification of the same benchmark
with a tri-axial ellipsoid is considered as well.

Although our motivation stems mostly from the numerical side, the proposed equations
are purely analytic and as such may also serve as a new tool in future analytic studies; to the author's knowledge, this is the first
time that the constitutive equations for the log-conformation formulation can be stated in a closed form in
this generality.

\section{Theory}\label{sec:theory}
The aim of this section is the derivation of an alternative constitutive equation with $\bm{\Psi}$ as a new primal variable.
Starting point is the original constitutive equation in terms of the conformation tensor $\bm{\sigma}$ and the velocity $\bm{u}$.
Both are fields that, given boundary and initial conditions, have to be determined over a time-span $[0,T]$
and a $d$-dimensional domain $\tilde{\Omega}\subset\mathbb{R}^d$.

Following the exposition in \cite{Knechtges2014}, we consider
constitutive models of the form
\begin{align}
	\partial_t \bm{\sigma} + (\bm{u}\cdot \nabla) \bm{\sigma} + [\bm{\sigma},\Omega(\bm{u})] - \varepsilon(\bm{u})\bm{\sigma}
			- \bm{\sigma}\varepsilon(\bm{u})
		=& - \frac{1}{\lambda} P(\bm{\sigma})\,,
	\label{eqn:gen_conf_form2}
\end{align}
where $\varepsilon(\bm{u}) = \frac{1}{2}\left(\nabla\bm{u} + \nabla\bm{u}^T\right)$ denotes the strain tensor,
$\Omega(\bm{u}) = \frac{1}{2}\left(\nabla\bm{u} - \nabla\bm{u}^T\right)$ the vorticity tensor, $\lambda$ the
relaxation time, and $P(\bm{\sigma})$ an analytic function.
The bracket $[\bm{\sigma},\Omega(\bm{u})]$ is the so-called commutator, which is defined as
\begin{align*}
	[\bm{\sigma},\Omega(\bm{u})] = \bm{\sigma}\Omega(\bm{u}) - \Omega(\bm{u})\bm{\sigma}\, .
\end{align*}
Common choices for $P(\bm{\sigma})$ are $P(\bm{\sigma}) = \bm{\sigma} -\bm{1}$, leading to the Oldroyd-B
model \cite{Oldroyd1950}, or ${P(\bm{\sigma}) = \bm{\sigma} -\bm{1} + \alpha (\bm{\sigma}-\bm{1})^2}$ with $\alpha \in[0,1]$
in the Giesekus model \cite{Giesekus1982}. Generalizations of the
subsuming methods to the Johnson-Segalman model, as, e.g., done in \cite{Saramito2014}, or other models are in
principle possible, but omitted here for the sake of brevity.

Since the velocity field $\bm{u}$ is not determined so far, we have to combine the constitutive equations with
the Navier--Stokes equations in order to obtain a complete system of partial differential equations.
More specifically, the Navier--Stokes equations are given by
\begin{equation}
\label{eqn:navierstokes}
\begin{gathered}
	\nabla\cdot \bm{u} = 0\\
	\rho (\partial_t + \bm{u}\cdot\nabla) \bm{u} + \nabla p - 2\, \mu_S \nabla\cdot\varepsilon(\bm{u})
		- \frac{\mu_P}{\lambda} \nabla \cdot (\bm{\sigma}-\bm{1}) = 0\, ,
\end{gathered}
\end{equation}
with density $\rho$, as well as solvent and polymeric viscosity constants $\mu_S$ and $\mu_P$, respectively.

Furthermore, we will introduce function spaces $\mathcal{H}$ and $\mathcal{H}'$, which for the moment
could be chosen as perfectly smooth, i.e., $\mathcal{H}=\mathcal{H}' = C^\infty([0,T]\times\overline{\tilde{\Omega}})$,
and the derived spaces
\begin{align*}
	H &= \mathcal{H}^{d\times d} \quad H_{sym} = \{\bm{X}\in H \mathrel{|} \bm{X}^T = \bm{X}\},\\
	H' &= \mathcal{H}'^{d\times d} \quad H_{sym}' = \{\bm{X}\in H' \mathrel{|} \bm{X}^T = \bm{X}\}\, .
\end{align*}

The central statement of this paper then reads:
\begin{thm}\label{thm:firstmainthm}
Let the velocity field $\bm{u}$ be given with $\varepsilon(\bm{u})\in H_{sym}'$ and $\Omega(\bm{u})\in H'$.
If $\bm{\Psi}\in H_{sym}$ satisfies
\begin{align}
	\label{eqn:logconf}
	\partial_t\bm{\Psi} + (\bm{u}\cdot \nabla) \bm{\Psi} + [\bm{\Psi},\Omega(\bm{u})]
			- \frac{1}{(2\pi i)^2} \int_\Gamma \int_\Gamma f(z-z') \frac{1}{z-\bm{\Psi}}
				\varepsilon(\bm{u}) \frac{1}{z'-\bm{\Psi}} dz\, dz'&=
			- \frac{1}{\lambda} P\left(e^{\bm{\Psi}}\right) e^{-\bm{\Psi}}
\end{align}
with
\begin{align*}
	f(x) =& x + \frac{2x}{e^{x}-1} = \frac{x}{\tanh (x/2)}\, ,
\end{align*}
and $\Gamma$ chosen as a closed path surrounding the spectrum of $\bm{\Psi}$ in $\{z\in\mathbb{C}\mathrel{|} |\Im(z)| < \pi\}$,
then $\bm{\sigma}=\exp\bm{\Psi}\in H_{sym}$ solves the original constitutive equation \eqref{eqn:gen_conf_form2}.
\end{thm}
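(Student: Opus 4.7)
The plan is to substitute $\bm\sigma=\exp\bm\Psi$ into the left-hand side of \eqref{eqn:gen_conf_form2} and to rewrite every term as the Fr\'echet derivative of the matrix exponential, which I denote by $D\exp(\bm\Psi)$, applied to a suitable argument. After pulling $D\exp(\bm\Psi)$ out of the sum one should recognize precisely the left-hand side of \eqref{eqn:logconf}; the log-conformation equation then lets one substitute the right-hand side, and a final application of $D\exp(\bm\Psi)$ should return $-\tfrac{1}{\lambda}P(\bm\sigma)$. The workhorse throughout is the Cauchy representation
\[
   D\exp(\bm\Psi)[\bm X]=\frac{1}{2\pi i}\int_\Gamma e^z\,(z-\bm\Psi)^{-1}\,\bm X\,(z-\bm\Psi)^{-1}\,dz,
\]
whose kernel is the same double-resolvent object that appears under the double integral in \eqref{eqn:logconf}. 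In an eigenbasis of $\bm\Psi$ with eigenvalues $\lambda_k$ the operator $D\exp(\bm\Psi)$ acts by Hadamard multiplication with the divided differences $(e^{\lambda_i}-e^{\lambda_j})/(\lambda_i-\lambda_j)$, which is what will make the final comparison a pointwise identity.

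The material-derivative, vorticity-commutator and relaxation pieces each take only a few lines. The chain rule applied to the Cauchy representation gives $\partial_t e^{\bm\Psi}+(\bm u\cdot\nabla)e^{\bm\Psi}=D\exp(\bm\Psi)[\partial_t\bm\Psi+(\bm u\cdot\nabla)\bm\Psi]$. For the commutator I would first verify the resolvent identity
\[
   \bigl[(z-\bm\Psi)^{-1},\Omega(\bm u)\bigr]=(z-\bm\Psi)^{-1}[\bm\Psi,\Omega(\bm u)](z-\bm\Psi)^{-1},
\]
which is a one-line calculation using $\Omega(\bm u)(z-\bm\Psi)-(z-\bm\Psi)\Omega(\bm u)=[\bm\Psi,\Omega(\bm u)]$, and then integrate against $e^z/(2\pi i)$ to obtain $[e^{\bm\Psi},\Omega(\bm u)]=D\exp(\bm\Psi)[[\bm\Psi,\Omega(\bm u)]]$. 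The relaxation term is the easiest: $P(e^{\bm\Psi})e^{-\bm\Psi}$ is itself a function of $\bm\Psi$ and therefore commutes with $\bm\Psi$, so $D\exp(\bm\Psi)$ acts on it by plain left-multiplication with $e^{\bm\Psi}$, leaving $P(\bm\sigma)$ after cancellation.

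The main obstacle, and the only genuinely new calculation, is the strain term: I must show that the double contour integral in \eqref{eqn:logconf} is sent by $D\exp(\bm\Psi)$ to $\varepsilon(\bm u)\bm\sigma+\bm\sigma\varepsilon(\bm u)$. I would work in an eigenbasis of $\bm\Psi$ and apply the residue theorem twice: the inner integration picks up the simple pole at $z=\lambda_i$ and yields $f(\lambda_i-z')$, then the outer integration picks up the simple pole at $z'=\lambda_j$, so the $(i,j)$-component of the double integral collapses to $f(\lambda_i-\lambda_j)\,\varepsilon_{ij}$. Multiplying by the divided difference coming from $D\exp(\bm\Psi)$ and using the scalar identity
\[
   \frac{e^a-e^b}{a-b}\cdot\frac{a-b}{\tanh((a-b)/2)}=e^a+e^b,
\]
I obtain $(e^{\lambda_i}+e^{\lambda_j})\,\varepsilon_{ij}$, which is exactly the $(i,j)$-entry of $\varepsilon(\bm u)\bm\sigma+\bm\sigma\varepsilon(\bm u)$ in the same basis. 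The hypothesis that $\Gamma$ lies in the strip $|\Im z|<\pi$ enters here to ensure that $f(z-z')$ stays holomorphic on and inside the product contour, since the only singularities of $f$ sit at $\pm 2\pi i(\mathbb Z\setminus\{0\})$; and degenerate eigenvalues cause no trouble because all manipulations occur inside the contour integrals, whose integrands remain regular in the limit $\lambda_i\to\lambda_j$.
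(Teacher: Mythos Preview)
Your argument is correct and, for the crucial strain term, takes a genuinely different route from the paper. Both proofs identify the Fr\'echet derivative of the exponential with the operator driving the transformation (the paper writes it as the Wilcox integral $\int_0^1 e^{(1-\alpha)\bm\Psi}\,(\cdot)\,e^{\alpha\bm\Psi}\,d\alpha$, you as the Cauchy integral), and both dispatch the material-derivative, vorticity, and relaxation terms in essentially the same short way. The divergence is in the strain identity
\[
   D\exp(\bm\Psi)\bigl[F(\bm\Psi,\varepsilon(\bm u))\bigr]=\varepsilon(\bm u)\,e^{\bm\Psi}+e^{\bm\Psi}\,\varepsilon(\bm u).
\]
You diagonalize $\bm\Psi$, apply the residue theorem twice to collapse the double contour integral to $f(\lambda_i-\lambda_j)\varepsilon_{ij}$, and then verify the elementary scalar identity $\frac{e^a-e^b}{a-b}\cdot f(a-b)=e^a+e^b$. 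The paper instead avoids any eigenbasis: it inserts a scaling parameter $\beta\in\mathbb C$, uses Lemma~\ref{lem:series} to expand $F(\beta\bm\Psi,\varepsilon(\bm u))$ as a Bernoulli-number series of iterated commutators for small $|\beta|$, invokes the series identity already proven in \cite{Knechtges2014}, and then extends to $\beta=1$ by analytic continuation.

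What each approach buys: yours is self-contained and elementary, needing only the residue theorem and a one-line hyperbolic identity; it is also the form that actually gets implemented later in the paper (cf.\ Eq.~\eqref{eqn:evalF}). The paper's route, by never passing through an eigenbasis, establishes the identity directly in the abstract commutative Banach-algebra setting $\mathcal H\subset\mathcal H'$ that the statement is formulated in---where pointwise diagonalization is not available and the spectrum need not consist of finitely many eigenvalues. If you want your proof to cover that generality you would need either a density argument reducing to the diagonalizable case, or to redo the strain computation purely with resolvent identities and contour manipulations without ever choosing a basis.
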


\begin{figure}
\begin{center}
\small
\begin{tikzpicture}
	\tikzstyle{singularity}=[circle,fill=black,thick,inner sep=0pt,minimum size=1.5mm]

	\node (pihalf) [draw=black,fill=gray!50,circle,inner sep=0,minimum size=1.0cm] at (0,0) {};
	\node (desc) at (0.95,0.4) {$B_{\pi/2}(0)$};

	\draw[->] (-4.2,0) -- (4.2,0) node[right] {$\Re (z)$};
	\draw[->] (0,-2.7) -- (0,2.7) node[above] {$\Im(z)$};

	\node (twopi) [singularity, label=right:$2\pi$] at (0,2) {};
	\node (minustwopi) [singularity, label=right:$-2\pi$] at (0,-2) {};

	\draw[dashed, very thin] (-4.2,1) -- (4.2,1);
	\draw[dashed, very thin] (-4.2,-1) -- (4.2,-1);
	\draw[-,very thick] (-0.1,1) -- (0.1,1) node[above right] {$\pi$};
	\draw[-,very thick] (-0.1,-1) -- (0.1,-1) node[below right] {$-\pi$};

	\draw[->,thick] (30:3.75 and 0.75) arc (30:390:3.75 and 0.75) node[above right] {$\Gamma$};

	\draw[very thick] (-2.0,0) -- (2.0,0);
	\draw[very thick] (-1.95,0.1) -- (-2.0,0.1) -- (-2.0,-0.1) -- (-1.95,-0.1)
		node[below] {$-||\bm{\Psi}||_H$};
	\draw[very thick] (1.95,0.1) -- (2.0,0.1) -- (2.0,-0.1) -- (1.95,-0.1)
		node[below] {$\phantom{-}||\bm{\Psi}||_H$};
\end{tikzpicture}
\end{center}
\caption{Illustration of a particular choice of the integration path $\Gamma$, as used in Theorem~\ref{thm:firstmainthm}.
	Here, choosing $\Gamma$ as an ellipse, with semi-major axis greater than $||\bm{\Psi}||_H$ and semi-minor axis smaller than $\pi$,
	ensures that the spectrum of $\bm{\Psi}$ is enclosed by $\Gamma$, while the poles of $f$, especially $\pm 2\pi i$, do not
	contribute to the integral.}
\label{fig:complexplane}
\end{figure}
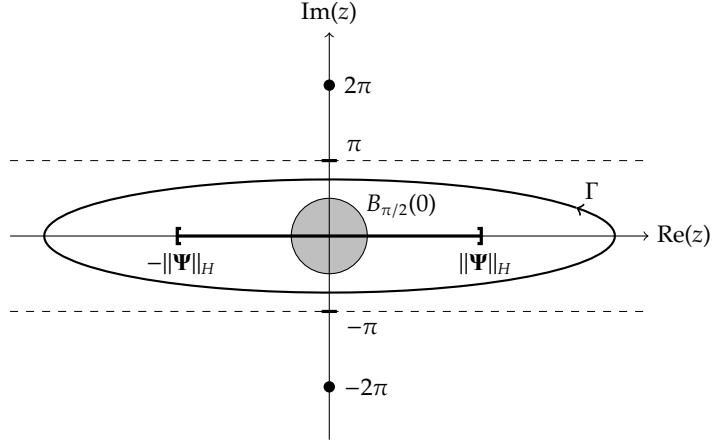

Before we come to the proof of this theorem, we need to consider certain properties of the relevant terms.
The first thing to notice in Eq.~\eqref{eqn:logconf} is the double integral, which is
similar to the familiar Cauchy integral from complex analysis.
One of the main differences to the ordinary Cauchy integral, however, is that the scalar ratio has been replaced by the resolvent
$1/(z-\bm{\Psi}) := (z\bm{1}-\bm{\Psi})^{-1}$, where $\bm{1}$ is the identity matrix and ${}^{-1}$~indicates the matrix inverse.
For smooth function spaces, it can be deduced
that, at a specific instant of space and time,
the resolvent exists if and only if $z$ does not equal any of the eigenvalues of $\bm{\Psi}(t,\bm{x})$, which are all real-valued.
Encircling these poles with our integration path $\Gamma$ subsequently gives us, by the same means as in the Cauchy integral setting,
some information on $f$ at these poles, but with the additional complexity that we have to deal with the matrix algebra.

Although we will further use this idea of numerically evaluating the integrals at the eigenvalues later on,
we will now leave the setting of smooth function spaces. Instead, we consider the
more general case of choosing $\mathcal{H}'$ as a Banach space and $\mathcal{H}\subset\mathcal{H}'$ as a commutative Banach algebra.
This opens up the door to a variety of spaces as they are used
in the analysis of partial differential equations. An example set of spaces in this more general setting would be the Sobolev-based spaces
\begin{align*}
	\mathcal{H} &= C^1([0,T],H^{s-1}(\tilde{\Omega}))\cap C^0([0,T],H^s(\tilde{\Omega})),\\
	\mathcal{H}'&= C^0([0,T],H^{s-1}(\tilde{\Omega}))\, ,
\end{align*}
with $s>d/2$ and $\tilde{\Omega}\subset \mathbb{R}^d$ being a Lipschitz-bounded domain \cite{Adams2003}.
It should be stressed that the mathematical discussion here is not limited to these spaces, and for the general requirements
on $\mathcal{H}$ and $\mathcal{H}'$ we refer to the appendix of \cite{Knechtges2014}.

Considering whether on these general spaces the double integral is well defined, the set of values
for which the resolvent is not defined is no longer restricted to the distinct eigenvalues, but may in fact be larger, although
still real-valued. Therefore, in order to
separate the terminology from the matrix algebra, this set is called spectrum in the general Banach algebra setting.
The generalization of Cauchy's integral to the theory of
Banach algebras\footnote{Throughout this paper we deliberately use the same symbol for the Banach algebra
as for its complexification.} is known as Dunford's integral and it is the essential ingredient
to define a functional calculus of holomorphic functions on these algebras \cite{Rudin,Yosida}.
More specifically, for a function $g$ that is holomorphic in the neighborhood of the spectrum of $\bm{\Psi}$, the $H$-valued
function is defined as
\begin{align}
	\label{eqn:dunfordintegral}
	g(\bm{\Psi}) :=& \frac{1}{2\pi i} \int_\Gamma \frac{g(z)}{z-\bm{\Psi}} dz\, ,
\end{align}
where $\Gamma$ is a contour surrounding the spectrum of $\bm{\Psi}$ within the same neighborhood.
Here, as well as in Theorem~\ref{thm:firstmainthm}, it is assumed that $\Gamma$ encircles the spectrum only once.
An immediate consequence of this definition is
that it allows us to explain the exponential function of $\bm{\Psi}$, which also has to be an element of $H_{sym}$.

A discrepancy between the integral in Theorem \ref{thm:firstmainthm} and the usual Dunford integral is that the argument of $f$
depends on the difference between the two integral variables $z$ and $z'$. The latter is also what makes it more difficult to ensure that
the poles of $f$, especially $\pm 2\pi i$, do not contribute to the integral.
In the formulation of Theorem~\ref{thm:firstmainthm} this has been realized by restricting the imaginary part of the integration
path $\Gamma$ to the region $|\Im{z}| < \pi$.
An example of a closed curve $\Gamma$ fulfilling the aforementioned criteria is depicted in Fig.~\ref{fig:complexplane}, where the fact is
also used that the spectrum of $\bm{\Psi}$ is always contained in the interval~$[-||\bm{\Psi}||_H,||\bm{\Psi}||_H]$.

Despite this minor restriction, most of the properties of Dunford's integral carry over to the double integral as well.
One of the more important features is the independence of the integral on the exact contour of $\Gamma$,
which is a key consequence of Cauchy's theorem \cite[Theorem 3.31]{Rudin}.

The final reason for not including the poles of $f$ in the integral in Theorem~\ref{thm:firstmainthm} is that we want to express $f$ by a Taylor series
in the course of the proof.
\begin{lem}\label{lem:series}
Let $g$ be a holomorphic function on a convex domain $\Omega'\subset\mathbb{C}$. Moreover, let the ball of radius $r$,  $B_r(0)$,
be contained in $\Omega'$. The Taylor series on this ball shall be given by $g(z) = \sum_{n=0}^\infty b_n z^n$.
Then for every $\bm{A}\in H$ with $||\bm{A}||_H < r/2$, $\bm{B}\in H'$, and $\Gamma \subset \frac{1}{2}\Omega'$ a contour around the
spectrum of $\bm{A}$, it holds
\begin{align*}
	 F(\bm{A},\bm{B}) :=& \frac{1}{(2\pi i)^2}
			\int_\Gamma \int_\Gamma g(z-z') \frac{1}{z-\bm{A}} \bm{B} \frac{1}{z'-\bm{A}} dz\, dz'\\
		 =&\sum_{n=0}^{\infty} b_n \{\bm{A},\bm{B}\}_{n}\, ,
\end{align*}
where $\{\bm{A},\bm{B}\}_{n}$ denotes the $n$-th iterated commutator
\begin{align*}
	\{\bm{A},\bm{B}\}_{n} :=& [\bm{A},\{\bm{A},\bm{B}\}_{n-1}] = \sum_{i=0}^n \binom{n}{i} (-1)^{i} \bm{A}^{n-i} \bm{B} \bm{A}^i\, .
\end{align*}
\end{lem}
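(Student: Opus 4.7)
The plan is to substitute the Taylor series $g(w)=\sum_{n\geq 0} b_n w^n$ of $g$ at the origin into the double contour integral, interchange summation with integration, expand each $(z-z')^n$ by the binomial theorem, and identify the resulting iterated integrals as monomials in $\bm{A}$ by means of the holomorphic functional calculus \eqref{eqn:dunfordintegral}.

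First, I would show that $g(z-z')$ is represented by its power series throughout $\Gamma\times\Gamma$. The hypothesis $\|\bm{A}\|_H<r/2$ places the spectrum of $\bm{A}$ inside $B_{r/2}(0)$, so a contour $\Gamma$ surrounding that spectrum can in fact be chosen inside $\tfrac{1}{2}\Omega'$. The geometric condition $\Gamma\subset\tfrac{1}{2}\Omega'$, together with the convexity of $\Omega'$ and $B_r(0)\subset\Omega'$, is precisely what keeps $z-z'$ inside the disc of convergence of $\sum b_n w^n$ for all $z,z'\in\Gamma$; hence the series converges absolutely and uniformly on the compact set $\Gamma\times\Gamma$. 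Since the resolvents $1/(z-\bm{A})$ and $1/(z'-\bm{A})$ are holomorphic in $z,z'$ on a neighbourhood of $\Gamma$ and therefore uniformly bounded in $H$-norm there, the full $H'$-valued integrand equals the uniform limit of its Taylor partial sums, and term-by-term Bochner integration is legitimate.

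After the interchange I would apply the binomial expansion $(z-z')^n=\sum_{i=0}^n \binom{n}{i}(-1)^i z^{n-i}(z')^i$, which decouples the two integration variables, and invoke Fubini to factor each resulting term:
\begin{align*}
\frac{1}{(2\pi i)^2}\int_\Gamma\!\!\int_\Gamma z^{n-i}(z')^i\,\frac{1}{z-\bm{A}}\,\bm{B}\,\frac{1}{z'-\bm{A}}\,dz\,dz' = \bm{A}^{n-i}\,\bm{B}\,\bm{A}^i,
\end{align*}
where the identity $\tfrac{1}{2\pi i}\int_\Gamma z^k(z-\bm{A})^{-1}dz=\bm{A}^k$ is just \eqref{eqn:dunfordintegral} applied to the monomial $z\mapsto z^k$. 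Summing in $i$ reproduces the closed form of $\{\bm{A},\bm{B}\}_n$ given in the lemma, so collecting the double sum yields
\begin{align*}
F(\bm{A},\bm{B}) = \sum_{n=0}^\infty b_n \sum_{i=0}^n \binom{n}{i}(-1)^i \bm{A}^{n-i}\,\bm{B}\,\bm{A}^i = \sum_{n=0}^\infty b_n\,\{\bm{A},\bm{B}\}_n,
\end{align*}
as claimed.

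The main obstacle I anticipate lies in the first step: one must argue carefully that in the Banach-algebra-valued setting the partial sums of $\sum b_n(z-z')^n$ converge in $H'$-norm uniformly over the whole product contour, so that the sum can indeed be pulled through the Bochner double integral. Once a Weierstrass-type majorant $\sum|b_n|(2\rho)^n$ with $\rho=\max_{z\in\Gamma}|z|<r/2$ is combined with the uniform resolvent bound on the compact set $\Gamma$, the remaining manipulations are purely algebraic and essentially reduce to the scalar Cauchy formula applied to monomials in $\bm{A}$.
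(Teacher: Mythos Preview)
Your proposal is correct and follows essentially the same route as the paper: deform $\Gamma$ into $B_{r/2}(0)$ so that $z-z'\in B_r(0)$ uniformly, use uniform convergence of the Taylor series to interchange sum and integral, expand $(z-z')^n$ binomially, and evaluate the resulting separated Dunford integrals as $\bm{A}^{n-i}\bm{B}\bm{A}^i$. One minor remark: your appeal to the convexity of $\Omega'$ and the inclusion $\Gamma\subset\tfrac{1}{2}\Omega'$ does not by itself force $z-z'$ into $B_r(0)$; what actually does the job is the choice $\Gamma\subset B_{r/2}(0)$ (which you invoke anyway via $\rho<r/2$ in your Weierstrass majorant), exactly as the paper makes explicit through Cauchy's theorem.
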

\begin{proof}
Without loss of generality, Cauchy's theorem allows us to choose a contour $\Gamma$ within $B_{r/2}(0)$
that still surrounds the spectrum of $\bm{A}$.
Since the Taylor series converges uniformly on every compact subset of $B_r(0)$, and especially on $\Gamma-\Gamma\subset B_r(0)$,
one can deduce by similar means as for Dunford's integral (cf. \cite[Theorem 10.27]{Rudin}) that $g$ in $F(\bm{A},\bm{B})$
can be approximated by the Taylor series to yield an arbitrarily accurate approximation of $F(\bm{A},\bm{B})$.
As such, we can
assume $g(z-z') = (z-z')^n$. Furthermore, using a binomial expansion
$(z-z')^n = \sum_{i=0}^n \binom{n}{i} (-1)^i z^{n-i} {z'}^i$ we obtain
\begin{align*}
	& \frac{1}{(2\pi i)^2} \int_\Gamma \int_\Gamma g(z-z') \frac{1}{z-\bm{A}} \bm{B} \frac{1}{z'-\bm{A}} dz\, dz' \\
		=& \sum_{i=0}^n \binom{n}{i} (-1)^i \left(\frac{1}{2\pi i}\int_\Gamma \frac{z^{n-i}}{z-\bm{A}} dz\right)\, \bm{B}\,
			\left(\frac{1}{2\pi i} \int_\Gamma \frac{{z'}^{i}}{z'-\bm{A}} dz'\right)\, ,
\intertext{which together with \eqref{eqn:dunfordintegral}, or more rigorously \cite[Lemma 10.24]{Rudin}, yields the desired iterated commutator}
		=& \sum_{i=0}^n \binom{n}{i} (-1)^{i} \bm{A}^{n-i} \bm{B} \bm{A}^i
		= \{\bm{A},\bm{B}\}_{n}\, .
\end{align*}
\end{proof}

This lemma is already sufficient to prove Theorem~\ref{thm:firstmainthm} in the case $||\bm{\Psi}||_H<\pi$, as can be seen by
choosing $g(z) = f(z)=2\sum_{n=0}^\infty  \frac{B_{2n}}{(2n)!} z^{2n}$,
as well as $\Omega' = \mathbb{R}+i(-2\pi,2\pi)$ and $r=2\pi$.
Then it becomes apparent that if $\bm{\Psi}$ fulfills Eq.~\eqref{eqn:logconf}, it also has to
fulfill
\begin{align*}
	\partial_t\bm{\Psi} + (\bm{u}\cdot \nabla) \bm{\Psi} + [\bm{\Psi},\Omega(\bm{u})]
			- 2 \sum_{n=0}^\infty \frac{B_{2n}}{(2n)!} \{\bm{\Psi},\varepsilon(\bm{u})\}_{2n} &=
			- \frac{1}{\lambda} P\left(e^{\bm{\Psi}}\right) e^{-\bm{\Psi}}\, ,
\end{align*}
where $B_{2n}$ denote the even Bernoulli numbers.
This is exactly the equation for which the conclusion in Theorem \ref{thm:firstmainthm} has been proven in \cite[Theorem 1]{Knechtges2014}.

The generalization to $||\bm{\Psi}||_H \geq \pi$ is part of the
\begin{proof}[Proof of Theorem \ref{thm:firstmainthm}]
Assuming $\bm{\Psi}$ solves Eq.~\eqref{eqn:logconf},
in a first step the Wilcox Lemma \cite{Wilcox1966} is applied to handle the derivatives
$\partial_t + \bm{u}\cdot\nabla$ of the exponential mapping $\bm{\sigma} = \exp \bm{\Psi}$,
such that subsequently \eqref{eqn:logconf} can be inserted:
\begin{align*}
	\left(\partial_t + \bm{u}\cdot\nabla\right) \bm{\sigma} =& \int_0^1 e^{(1-\alpha)\bm{\Psi}}
			\left(\left(\partial_t + \bm{u}\cdot\nabla\right) \bm{\Psi}\right) e^{\alpha\bm{\Psi}}\, d\alpha\\
		=& - \frac{1}{\lambda} \int_0^1 e^{(1-\alpha)\bm{\Psi}}
				P\left(e^{\bm{\Psi}}\right) e^{-\bm{\Psi}} e^{\alpha\bm{\Psi}}\, d\alpha\\
			& - \int_0^1 e^{(1-\alpha)\bm{\Psi}} [\bm{\Psi},\Omega(\bm{u})] e^{\alpha\bm{\Psi}}\, d\alpha\\
			& + \int_0^1 e^{(1-\alpha)\bm{\Psi}} \frac{1}{(2\pi i)^2} \int_\Gamma \int_\Gamma f(z-z') \frac{1}{z-\bm{\Psi}}
				\varepsilon(\bm{u}) \frac{1}{z'-\bm{\Psi}} dz\, dz'\, e^{\alpha\bm{\Psi}}\, d\alpha\, .
\end{align*}
The integral involving $P\left(e^{\bm{\Psi}}\right)$ is the easiest to handle since all involved terms commute with each other, resulting in the contribution
$-\frac{1}{\lambda} P(\bm{\sigma})$. The vorticity term can be simplified by the fundamental theorem of calculus, which yields
\begin{align*}
	- \int_0^1 e^{(1-\alpha)\bm{\Psi}} [\bm{\Psi},\Omega(\bm{u})] e^{\alpha\bm{\Psi}}\, d\alpha
		=& \int_0^1 \partial_\alpha \left(e^{(1-\alpha)\bm{\Psi}} \Omega(\bm{u}) e^{\alpha\bm{\Psi}}\right)\, d\alpha
		= -e^{\bm{\Psi}} \Omega(\bm{u}) + \Omega(\bm{u}) e^{\bm{\Psi}} = - [\bm{\sigma}, \Omega(\bm{u})]\,.
\end{align*}
Hence, for $\bm{\sigma}$ to fulfill the original constitutive equation it is left to prove
\begin{align*}
	\int_0^1 e^{(1-\alpha)\bm{\Psi}} \frac{1}{(2\pi i)^2} \int_\Gamma \int_\Gamma f(z-z') \frac{1}{z-\bm{\Psi}}
				\varepsilon(\bm{u}) \frac{1}{z'-\bm{\Psi}} dz\, dz' e^{\alpha\bm{\Psi}}\, d\alpha
		=& \varepsilon(\bm{u}) e^{\bm{\Psi}} + e^{\bm{\Psi}} \varepsilon(\bm{u})\, .
\end{align*}
Rather than directly proving this equality we will follow the argumentation of Theorem 2 in \cite{Knechtges2014},
and consider a slightly more general equation where $\bm{\Psi}$ is replaced by $\beta\bm{\Psi}$,
such  that an analytic continuation argument in $\beta$ can be used to bridge the gap to the case $||\bm{\Psi}||_H < \pi$.
In particular, without loss of generality we will assume
that $\Gamma$, in addition to the spectrum of $\bm{\Psi}$, also encloses $B_{\pi/2}(0)$, as, e.g., depicted in Fig.~\ref{fig:complexplane}.
Our assertion now reads that
\begin{align}
	\int_0^1 e^{(1-\alpha)\beta\bm{\Psi}} F(\beta\bm{\Psi},\varepsilon(\bm{u})) e^{\alpha\beta\bm{\Psi}}\, d\alpha
		=& \varepsilon(\bm{u}) e^{\beta\bm{\Psi}} + e^{\beta\bm{\Psi}} \varepsilon(\bm{u})
\label{eqn:analyticcontformula1}
\end{align}
shall hold with
\begin{align*}
	F(\beta\bm{\Psi},\varepsilon(\bm{u})) :=& \frac{1}{(2\pi i)^2}
		\int_\Gamma \int_\Gamma f(z-z') \frac{1}{z-\beta\bm{\Psi}} \varepsilon(\bm{u}) \frac{1}{z'-\beta\bm{\Psi}} dz\, dz'
\end{align*}
for every $\beta$ in a sufficiently small simply-connected neighborhood $D$ of $[0,1]\cup B_{\pi/(2||\bm{\Psi}||_H)}(0)\subset\mathbb{C}$.

It is clear that the right-hand side of Eq.~\eqref{eqn:analyticcontformula1} is holomorphic for all $\beta$, due to
\begin{align*}
	\partial_\beta e^{\beta\bm{\Psi}} =& \bm{\Psi} e^{\beta\bm{\Psi}}\, .
\end{align*}
Additionally, using
\begin{align}\label{eqn:dervresolvent}
	\partial_\beta \frac{1}{z-\beta\bm{\Psi}} =& \frac{1}{z-\beta\bm{\Psi}} \bm{\Psi}  \frac{1}{z-\beta\bm{\Psi}}\, ,
\end{align}
it is evident that the left-hand side is holomorphic for $\beta \in D$.

Restricting ourselves for a moment to $|\beta| < \pi/(2||\bm{\Psi}||_H)$, we deduce from Lemma~\ref{lem:series}
that
\begin{align*}
	\int_0^1 e^{(1-\alpha)\beta\bm{\Psi}} F(\beta\bm{\Psi},\varepsilon(\bm{u})) e^{\alpha\beta\bm{\Psi}}\, d\alpha
		&= \int_0^1 e^{(1-\alpha)\beta\bm{\Psi}} \left( 2 \sum_{n=0}^\infty \frac{B_{2n}}{(2n)!}
					\{\beta \bm{\Psi}, \varepsilon(\bm{u})\}_{2n}\right)
					e^{\alpha\beta\bm{\Psi}}\, d\alpha\, ,
\end{align*}
which is essentially the form for which the identity \eqref{eqn:analyticcontformula1} has already been proven
in the proof of Theorem 2 in \cite{Knechtges2014}.
Thus, we are left with applying the monodromy theorem that asserts the uniqueness of the analytic continuation on $D$.
Thereby, \eqref{eqn:analyticcontformula1}
has in particular to hold for $\beta = 1$, and $\bm{\sigma}$ solves the original constitutive equation \eqref{eqn:gen_conf_form2}.
\end{proof}

\section{Numerical implementation}\label{sec:numimpl}

Given the newly derived constitutive equation \eqref{eqn:logconf},
we are going to discuss the numerical discretization in conjunction
with the Finite Element Method (FEM). The first part of this section will be
centered around the formulation of the discretized weak form in terms of space-time elements.
The second part will then be concerned with the linearization of the discretized weak form by means of the Newton--Raphson method.
In particular, it will also deal with the evaluation of the double integral and its derivatives.
It should be noted that the two subsections are only loosely coupled and that the discussion of the latter
subsection is not limited to the discretization scheme we have chosen, but may be easily generalized to other schemes.

\subsection{Discretization}
As in the preceding paper \cite{Knechtges2014}, we will use a mixture of a Streamline Upwind/Petrov-Galerkin (SUPG)- and
Galerkin/Least-Squares (GLS)-stabilized finite element method in combination with space-time
meshes to discretize the full monolithic system of constitutive equation \eqref{eqn:logconf} and Navier--Stokes equations \eqref{eqn:navierstokes}.
The SUPG method, which has originally been proposed in \cite{Brooks1982}, will serve as the stabilization method
of the advection-dominated constitutive equation, whereas a modified adjoint GLS will be used to stabilize the momentum equation
\cite{Franca1992a,Franca1992b,Behr1993}. The choice of a space-time method is mainly motivated by future
applications to deforming-domain problems.

Assuming a slicing of our space-time
domain $Q$ into $N$ slices $Q_n$, each spanning the computational domain from $t_n$ to $t_{n+1}$,
and furthermore a triangulation of $Q_n$
by the elements collected in $\mathcal{T}_{h,n}$, we introduce the function space
\begin{align*}
	V_{h,n} =& \left\{v\in C^0(\overline{Q_n}) \mathrel{}\middle|\mathrel{} \forall Q^e_n \in \mathcal{T}_{h,n} , v\circ T_{Q^e_n}
			\in \mathbb{P}_2\otimes \mathbb{P}_1\right\}\, .
\end{align*}
Here, the Lagrange elements $\mathbb{P}_2$ and $\mathbb{P}_1$ are employed in space and time, respectively, with $T_{Q^e_n}$ denoting
the isoparametric geometrical mapping from the reference element onto $Q^e_n$. For all applications within this paper the complete
space-time domain simplifies to $Q=[0,T] \times \tilde{\Omega}$ and the corresponding slices to $Q_n = [t_n,t_{n+1}) \times \tilde{\Omega}$.

Furthermore, defining the spatial boundary of the space-time slab as $P_n = \bigcup_{t\in [t_n,t_{n+1}]} \{t\}\times \partial\tilde{\Omega}_t$,
where $\tilde{\Omega}_t$ designates the spatial extent of the computational domain at a given instant of time $t$,
we use the following trial and test spaces
\begin{align}
	\mathcal{S}_{h,n} =& \left\{ (\bm{u},p,\bm{\Psi}) \in (V_{h,n})^d \times V_{h,n} \times (V_{h,n})^{d\cdot(d+1)/2} \mathrel{}\middle|\mathrel{}
				\bm{u}|_{P_{n,\bm{u}}} = \bm{g}_{\bm{u}}, \bm{\Psi}|_{P_{n,\bm{\Psi}}} = \bm{g}_{\bm{\Psi}}\right\}\\
	\mathcal{V}_{h,n} =& \left\{ (\bm{v},q,\bm{\Phi}) \in (V_{h,n})^d \times V_{h,n} \times (V_{h,n})^{d\cdot(d+1)/2} \mathrel{}\middle|\mathrel{}
				\bm{v}|_{P_{n,\bm{u}}} = 0, \bm{\Phi}|_{P_{n,\bm{\Psi}}} = 0\right\}\, ,
\end{align}
with $P_{n,\bm{u}}$ and $P_{n,\bm{\Psi}}$ being the subsets of $P_n$ on which $\bm{g}_{\bm{u}}$ and $\bm{g}_{\bm{\Psi}}$
are prescribed as Dirichlet boundary conditions. The full trial space, spanning the whole space-time domain, is hence
chosen as
\begin{align*}
	\mathcal{S}_{h} =& \left\{ (\bm{u},p,\bm{\Psi}) \in L^2(Q,\mathbb{R}^{d+1+d\cdot(d+1)/2}) \mathrel{}\middle|\mathrel{}
			(\bm{u},p,\bm{\Psi})|_{[t_n,t_{n+1})} \in \mathcal{S}_{h,n}\right\}\, .
\end{align*}

Using these definitions, the discretized weak problem can be formulated as follows: \textit{Given
the initial conditions $(\bm{u}^h)_0^- = \bm{u}_0$ and
$(\bm{\Psi}^h)_0^- = \bm{\Psi}_0$, we seek $\bm{z}^h = (\bm{u}^h,p^h,\bm{\Psi}^h) \in \mathcal{S}_{h}$ such that
on each time slab $Q_n$ with $0\leq n \leq N-1$ and for every $\bm{w}^h = (\bm{v}^h,q^h,\bm{\Phi}^h)\in \mathcal{V}_{h,n}$
the following equation is fulfilled:}
\begin{align}
\label{eqn:weakformdisc}
\begin{split}
	0 = a_n(\bm{w}^h, \bm{z}^h)
		:=& \quad \int_{Q_n} \bm{v}^h \cdot \rho \left(\partial_t \bm{u}^h + (\bm{u}^h \cdot \nabla) \bm{u}^h\right)
					+ \int_{Q_n} \frac{\mu_P}{\lambda} \varepsilon(\bm{v}^h) : \left(e^{\bm{\Psi}^h} -\bm{1}\right)\\&
				+ \int_{Q_n} 2\mu_s \varepsilon(\bm{v}^h) : \varepsilon(\bm{u}^h) - \int_{Q_n} (\nabla \cdot \bm{v}^h) \, p^h
					+ \int_{\tilde{\Omega}_n} (\bm{v}^h)_n^+ \cdot \rho \left((\bm{u}^h)_n^+ - (\bm{u}^h)_n^-\right) \\&
				+ \sum_e \int_{Q_n^e} \tau_{mom} \frac{1}{\rho} \left(\rho (\bm{u}^h \cdot \nabla) \bm{v}^h + \nabla q^h + \mu_S \Delta \bm{v}^h
						- \frac{\mu_P}{\lambda} \nabla \cdot \bm{\Phi}^h\right) \\&\qquad
							\cdot \left(\rho (\partial_t \bm{u}^h + (\bm{u}^h \cdot \nabla) \bm{u}^h)
						+ \nabla p^h - \mu_S \Delta \bm{u}^h - \frac{\mu_P}{\lambda} \nabla \cdot \left(e^{\bm{\Psi}^h}-\bm{1}\right)\right)\\&
				+ \int_{Q_n} q^h \, (\nabla\cdot \bm{u}^h)
				+ \int_{\tilde{\Omega}_n} (\bm{\Phi}^h)_n^+ : \frac{\mu_P}{2\lambda} \left((\bm{\Psi}^h)_n^+ - (\bm{\Psi}^h)_n^-\right)\\&
				+ \int_{Q_n} \frac{\mu_P}{2\lambda} \left(\bm{\Phi}^h + \tau_{cons} (\bm{u}^h \cdot\nabla) \bm{\Phi}^h\right) \\&\qquad
							: \left(\partial_t \bm{\Psi}^h + (\bm{u}^h\cdot \nabla) \bm{\Psi}^h + [\bm{\Psi}^h,\Omega(\bm{u}^h)]
								- F(\bm{\Psi}^h,\varepsilon(\bm{u}^h)) + \frac{1}{\lambda} P\left(e^{\bm{\Psi}^h}\right) e^{-\bm{\Psi}^h}\right)\, .
\end{split}
\end{align}
The inner product $\bm{\Phi}:\bm{\Psi}$ is as usual defined as $\mbox{tr}(\bm{\Phi}^T\bm{\Psi})$.
This weak form also incorporates concepts which are typical for space-time GLS realization, e.g.,
the weak coupling between the space-time slabs motivated by Discontinuous Galerkin methods.
Here, $(\bm{u}^h)_n^\pm$ is used as the short form for $\lim_{\xi\to 0} \bm{u}^h (t_n \pm \xi,\cdot)$ and $\tilde{\Omega}_n = \tilde{\Omega}_{t_n}$.

Furthermore, for all subsequent calculations within this paper the stabilization parameters were chosen as
\begin{align*}
	\tau_{mom} =& \mbox{min}\left(\rho \frac{h^2}{600\, \mu}, \frac{h}{2|\bm{u}|}, \frac{\Delta t}{2}\right)\, ,\\
	\tau_{cons} =& \mbox{min}\left(\left(2\frac{|\bm{u}|}{h} + \lambda^{-1}\right)^{-1}, \frac{\Delta t}{2}\right)\, ,
\end{align*}
where $\Delta t$ is the time-step size, $h$ the element diameter, $\mu = \mu_S+\mu_P$ the full viscosity, and $|\bm{u}|$
the absolute value of the velocity evaluated at the element center. In cases where stationary simulations were performed,
the corresponding parts of the discretized weak
form, namely, the explicit time-derivatives as well as the discontinuous coupling across space-time slabs, were neglected, which also applies to the
$\Delta t/2$ part of the stabilization constants. Similarly, in the creeping flow limit ($Re = 0$) the advective derivative
of the velocity $ (\bm{u}^h \cdot \nabla) \bm{u}^h$ was omitted from the momentum equation in conjunction with dropping $h/(2|\bm{u}|)$
from $\tau_{mom}$.

\subsection{Linearization and evaluation}\label{sec:linearization}
In a last step, the discretized weak form \eqref{eqn:weakformdisc} has to be linearized in order to make it amenable to linear solvers.
As already mentioned in the introduction, the used linearization method in this work is the Newton--Raphson algorithm,
which promises quadratic convergence
at the additional cost of providing a variational directional derivative of the weak form. More specifically, denoting the
directional derivative by
\begin{align*}
	\left. D a_n (\bm{w}^h, \cdot )\right|_{\bm{z}^h_{n,i}}\, \delta \bm{z}^h_{n,i} =&
			\left.\frac{d}{d\xi}\right|_{\xi=0} a_n(\bm{w}^h, \bm{z}^h_{n,i} + \xi \cdot \delta \bm{z}^h_{n,i})\, ,
\end{align*}
we iteratively solve
\begin{align*}
	\left. D a_n (\bm{w}^h, \cdot )\right|_{\bm{z}^h_{n,i}}\, \delta \bm{z}^h_{n,i}
		=& \, - a_n(\bm{w}^h, \bm{z}^h_{n,i})\quad \forall \bm{w}^h \in \mathcal{V}_{h,n}
\end{align*}
for $\delta \bm{z}^h_{n,i}\in\mathcal{V}_{h,n}$. The updated solution $\bm{z}^h_{n,i+1}$ can then be computed as
$\bm{z}^h_{n,i+1} = \bm{z}^h_{n,i} + \delta \bm{z}^h_{n,i}$.
The iteration is terminated as usual when the Euclidean norm of the residual $||\bm{r}||_2 := ||a_n(\cdot, \bm{z}^h_{n,i})||_2$
becomes smaller than a given threshold.

When the Newton--Raphson algorithm is employed in the context of the newly derived constitutive equation \eqref{eqn:logconf},
the immediate numerical implementation may lead to difficulties:
Due to their invariance on the exact contour of $\Gamma$, the evaluation of Cauchy-type integrals is prone to cancellation.
This applies to the double integral as well as to the exponential mapping.
The difficulty can be alleviated in the numerical setting by evaluating the integral directly or indirectly (e.g., through a quadrature rule)
only at specific instants of space and time. This condenses our Banach algebra to the usual matrix algebra, which essentially implies that
the spectrum of $\bm{\Psi}^h(t,\bm{x})$ contains at most up to $d$ distinct discrete points, i.e., the eigenvalues of $\bm{\Psi}^h(t,\bm{x})$.
Using the same techniques as are applied to identify the usual spectral decomposition method of interpreting matrix functions
with the Cauchy-type definition of matrix functions \eqref{eqn:dunfordintegral}, we will be able to reformulate the integral in the
framework of eigenvalues and eigenvectors.

For this, we introduce a set of $d$ eigenvalues $\lambda_i$ and $d$ eigenvectors $\bm{\tilde{e}}_i$ of $\bm{\Psi}^h(t,\bm{x})$, which are associated to a projection
operator $\bm{P}_i=\bm{\tilde{e}}_i\bm{\tilde{e}}_i^T$ that projects onto the one-dimensional subspaces spanned by the corresponding eigenvector.
Using this notation, linear algebra states that
\begin{align}\label{eqn:resolventeig}
	\frac{1}{z-\bm{\Psi}^h} =& \sum_{i=1}^d \frac{1}{z-\lambda_i} \bm{P}_i
\end{align}
has to hold, where for the sake of brevity the function arguments $(t,\bm{x})$ have been dropped.
Applying this equation to the double integral simplifies it to
\begin{align*}
	F(\bm{\Psi}^h,\varepsilon(\bm{u}^h)) =&\frac{1}{(2\pi i)^2} \int_\Gamma \int_\Gamma f(z-z') \frac{1}{z-\bm{\Psi}^h}
				\varepsilon(\bm{u}^h) \frac{1}{z'-\bm{\Psi}^h} dz\, dz'\\
		=& \sum_{i,j=1}^d \bm{P}_i \varepsilon(\bm{u}^h) \bm{P}_j \frac{1}{(2\pi i)^2} \int_\Gamma \int_\Gamma f(z-z')
				\frac{1}{z-\lambda_i} \frac{1}{z'-\lambda_j} dz\, dz'\, ,
\end{align*}
which then together with Cauchy's integral formula (or the residue theorem) yields
\begin{align}\label{eqn:evalF}
		F(\bm{\Psi}^h,\varepsilon(\bm{u}^h)) =& \sum_{i,j=1}^d f(\lambda_i - \lambda_j) \bm{P}_i \varepsilon(\bm{u}^h) \bm{P}_j\, .
\end{align}
It should be noted that this is the form of the $\varepsilon(\bm{u})$-term in the constitutive equation as it has been considered in \cite{Saramito2014}.

Of course, there is no doubt that, with the numerically well-studied QR-algorithms in mind, this form is much more suitable for numerical
evaluation. Nonetheless, it falls short in many applications when it comes to study perturbations of $\bm{\Psi}^h$, as it is the case
for the variational derivative needed in the Newton-Raphson algorithm. Existing implementations, as, e.g., in \cite{Knechtges2014,Saramito2014},
were limited to the two-dimensional case, since for a $2\times 2$ matrix it is still feasible to derive an algebraic closed expression for eigenvalues and eigenvectors
in dependence of $\bm{\Psi}^h$. Another approach would be general perturbation theory \cite{Kato}, which directly applies to
eigenvalues $\lambda_i$ and their projection operators $\bm{P}_i$, but this theory is prone to singularities in the vicinity
of degenerate eigenvalues.

The solution we will pursue here is similar to the general perturbation method in means of using complex calculus. As such we perform
the perturbation first in the framework of the double integral and then switch to the
eigenvalue representation. E.g., considering the variational derivative of the double integral $F(\bm{\Psi},\varepsilon(\bm{u}))$
with respect to $\bm{\Psi}^h$ in the direction $\delta\bm{\Psi}^h$, one obtains by similar means as in Eq.~\eqref{eqn:dervresolvent}
\begin{align*}
	\left. \frac{\partial}{\partial \xi}\right|_{\xi=0} F(\bm{\Psi}^h+\xi\,\delta\bm{\Psi}^h,\varepsilon(\bm{u}^h))
		=& \quad \frac{1}{(2\pi i)^2} \int_\Gamma \int_\Gamma f(z-z') \frac{1}{z-\bm{\Psi}^h} \delta\bm{\Psi}^h  \frac{1}{z-\bm{\Psi}^h}
				\varepsilon(\bm{u}^h) \frac{1}{z'-\bm{\Psi}^h} dz\, dz'\\
			& + \frac{1}{(2\pi i)^2} \int_\Gamma \int_\Gamma f(z-z') \frac{1}{z-\bm{\Psi}^h}
				\varepsilon(\bm{u}^h) \frac{1}{z'-\bm{\Psi}^h} \delta\bm{\Psi}^h  \frac{1}{z'-\bm{\Psi}^h} dz\, dz'\, .
\end{align*}
Inserting Eq.~\eqref{eqn:resolventeig} and applying the residue theorem then yields
\begin{align}\label{eqn:evaldervF}
	\left. \frac{\partial}{\partial \xi}\right|_{\xi=0} F(\bm{\Psi}^h+\xi\,\delta\bm{\Psi}^h,\varepsilon(\bm{u}^h))
		=& \sum_{i,j,k=1}^d \frac{f(\lambda_i-\lambda_k)-f(\lambda_j-\lambda_k)}{\lambda_i-\lambda_j}
				\left(\bm{P}_i \delta\bm{\Psi}^h \bm{P}_j \varepsilon(\bm{u}^h)  \bm{P}_k
					+ \bm{P}_k \varepsilon(\bm{u}^h) \bm{P}_j  \delta\bm{\Psi}^h \bm{P}_i\right)\, ,
\end{align}
where in accordance with the residue theorem the difference quotient has to be replaced by $f'(\lambda_i-\lambda_k)$ if $\lambda_i$
and $\lambda_j$ coincide.
It is clear that this formula can be evaluated along the same lines as the evaluation of the double integral itself \eqref{eqn:evalF}.

Similar considerations also yield the different derivatives of the exponential mapping as involved in the discretized weak form \eqref{eqn:weakformdisc}
\begin{align*}
	\left. \frac{\partial}{\partial \xi}\right|_{\xi=0} \exp(\bm{\Psi}^h+\xi\,\delta\bm{\Psi}^h)
		=& \sum_{i,j=1}^d \left(\frac{e^{\lambda_i}}{\lambda_i-\lambda_j} + \frac{e^{\lambda_j}}{\lambda_j-\lambda_i}\right)
			\bm{P}_i \delta\bm{\Psi}^h \bm{P}_j\\
		=& \sum_{i,j=1}^d e^{\lambda_i/2} e^{\lambda_j/2} \frac{\sinh((\lambda_i - \lambda_j)/2)}{(\lambda_i - \lambda_j)/2}
			\bm{P}_i \delta\bm{\Psi}^h \bm{P}_j\\
	\nabla \cdot \exp(\bm{\Psi}^h) =& \sum_{i,j,k=1}^d e^{\lambda_j/2} e^{\lambda_k/2} \frac{\sinh((\lambda_j - \lambda_k)/2)}{(\lambda_j - \lambda_k)/2}
			\bm{P}_j \partial_i\bm{\Psi}^h \bm{P}_k \bm{e}_i\, .
\end{align*}
Here, the vectors $\bm{e}_i$ denote the Cartesian basis vectors.
Additionally, due to the GLS stabilization, the variational derivative has to be considered for $\nabla \cdot \exp(\bm{\Psi}^h)$. The analysis yields
\begin{align*}
	\left. \frac{\partial}{\partial \xi}\right|_{\xi=0} \nabla \cdot \exp(\bm{\Psi}^h+\xi\,\delta\bm{\Psi}^h)
		=& \sum_{i,j,k=1}^d e^{\lambda_j/2} e^{\lambda_k/2} \frac{\sinh((\lambda_j - \lambda_k)/2)}{(\lambda_j - \lambda_k)/2}
				\bm{P}_j \partial_i\delta\bm{\Psi}^h \bm{P}_k \bm{e}_i\\
			&+ \sum_{i,j,k=1}^d \left(\frac{e^{\lambda_i}}{(\lambda_i-\lambda_j)(\lambda_i-\lambda_k)}
					+ \frac{e^{\lambda_j}}{(\lambda_j-\lambda_i)(\lambda_j-\lambda_k)}
					+ \frac{e^{\lambda_k}}{(\lambda_k-\lambda_i)(\lambda_k-\lambda_j)}\right)\\&\qquad\qquad
				\cdot \sum_{l=1}^d \left[\bm{P}_i \delta\bm{\Psi}^h \bm{P}_j \partial_l\bm{\Psi}^h \bm{P}_k \bm{e}_l
					+ \bm{P}_k \partial_l\bm{\Psi}^h \bm{P}_j \delta\bm{\Psi}^h \bm{P}_i \bm{e}_l\right]\, .
\end{align*}
For the numerical implementation, we will have to further rewrite the factor in the second sum, as it is in this form not appropriate
for evaluation
in the proximity of degenerate eigenvalues. Introducing auxiliary variables $x=(\lambda_i-\lambda_j)/3$, $y=(\lambda_i - \lambda_k)/3$, and
$z=(\lambda_j-\lambda_k)/3$, it can be reformulated as
\begin{align}
\begin{split}\label{eqn:evalexpd}
	& \frac{e^{\lambda_i}}{(\lambda_i-\lambda_j)(\lambda_i-\lambda_k)}
			+ \frac{e^{\lambda_j}}{(\lambda_j-\lambda_i)(\lambda_j-\lambda_k)}
			+ \frac{e^{\lambda_k}}{(\lambda_k-\lambda_i)(\lambda_k-\lambda_j)}\\
		=& \frac{1}{9} e^{\lambda_i/3} e^{\lambda_j/3} e^{\lambda_k/3} \left[ \frac{e^x -1}{x} \frac{e^y-1}{y}
			+ \frac{e^{-x} -1}{-x} \frac{e^z-1}{z}
			+ \frac{e^{-y} -1}{-y} \frac{e^{-z}-1}{-z}\right.\\&\qquad\left.
			+ \frac{1}{y-x}\left(\frac{e^{-x} -1}{-x} - \frac{e^{-y} -1}{-y}\right)
			+ \frac{1}{x+z}\left(\frac{e^x -1}{x} - \frac{e^{-z} -1}{-z}\right)
			+ \frac{1}{y-z}\left(\frac{e^y -1}{y} - \frac{e^z -1}{z}\right)\right]\, .
\end{split}
\end{align}
Thus, the evaluation is once again reduced to difference quotients. The latter, which already appeared in Eq.~\eqref{eqn:evaldervF},
can be easily approximated by a Taylor series in the vicinity of vanishing denominators
\begin{align*}
	\frac{g(x)-g(y)}{x-y} =& g^{(1)}\left(\frac{x+y}{2}\right)
		+ \frac{(x-y)^2}{24} g^{(3)} \left(\frac{x+y}{2}\right) + \mathcal{O}\left((x-y)^4\right)\, .
\end{align*}
Here $g(x)$ can stand either for $f(x)$ as in the case of Eq.~\eqref{eqn:evaldervF} or for $(e^x-1)/x$ as in Eq.~\eqref{eqn:evalexpd}.
In the former case the Taylor series is used for $|x-y| < 10^{-2}$ and in the latter case for $|x-y| < 10^{-3}$
in order to account for the use of floating point arithmetic with double precision.
In addition, the derivatives of $g(x)$ also have to be approximated for small values of $x$, which for $f(x)$ is performed
beneath $|x| < 10^{-1}$ and for $(e^x-1)/x$ below $|x| < 10^{-3}$. The numbers were obtained by comparing results
of a naive double precision implementation with the results computed in a much higher precision \cite{mpmath},
and afterwards choosing the Taylor polynomials
such that the relative error should not exceed $\sim 10^{-10}$. A higher precision is also quite unlikely to be needed as both
terms enter the linear equation system only on the left-hand side,
such that they only influence the convergence, but not the accuracy of the solution.
As we will later see, the convergence is influenced even more by the inexact solution of the
equation system through iterative linear solvers.

It should be noted that such a special treatment for the other functions involved is in general unnecessary.
The hyperbolic functions $\tanh(x/2)$ and
$\sinh(x/2)$ can be readily used if $x/2 \neq 0$, assuming their implementation is correctly rounded close to $0$ and the rounding
mode is set to round to nearest \cite{Muller2010}.
For $(e^x-1)/x$, the accuracy near $x\approx 0$ can be greatly improved by using a trick \cite{Higham2002} and evaluating
\begin{align*}
	\frac{e^x-1}{x} &= \frac{y-1}{\log y}\quad \mbox{with } y = e^x\, ,
\end{align*}
which is substituted by $1$ in the case of $y=1$.

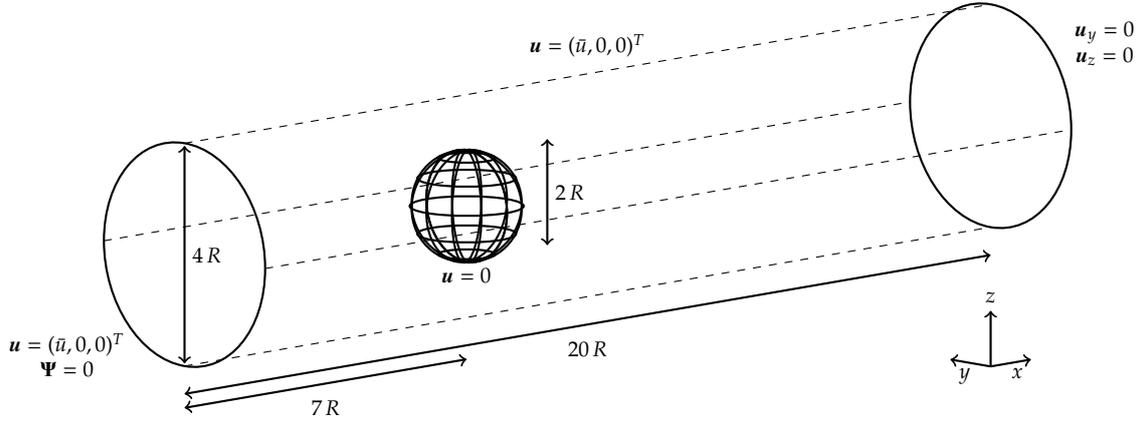
\begin{figure}[!t]
\footnotesize
\begin{center}
\tdplotsetmaincoords{80}{-45}
\begin{tikzpicture}[tdplot_main_coords,scale=0.75]
	\tikzstyle{silh}=[thin,dashed]
	\tikzstyle{sline}=[thick]
	\tikzstyle{axis}=[thick,->]

	\coordinate (O) at (7,0,0);
	\foreach \angle in {-90,-60,...,90}
	{
		\tdplotsinandcos{\sintheta}{\costheta}{\angle}%
		\coordinate (P) at (7,0,\sintheta);
		\tdplotdrawarc[sline]{(P)}{\costheta}{0}{360}{}{}
		\tdplotsetthetaplanecoords{\angle}
		\tdplotdrawarc[sline,tdplot_rotated_coords]{(O)}{1}{0}{360}{}{}
	}
	\node (sphere) at (7,0,-1.25) {$\bm{u} = 0$};

	\tdplotsetthetaplanecoords{90}
	\tdplotdrawarc[sline,tdplot_rotated_coords]{(0,0,0)}{2}{0}{360}{}{}
	\node[align=center] (inflow) at ($(-1,{sqrt(2)+0.5},{-sqrt(2)-0.5})$) {$\bm{u} = (\bar{u},0,0)^T$\\$\bm{\Psi}=0$};
	\tdplotsetthetaplanecoords{90}
	\tdplotdrawarc[sline,tdplot_rotated_coords]{(0,0,-20)}{2}{0}{360}{}{}
	\node[right=1.5,align=center] (outflow) at ($(20.5,{-sqrt(2)},{sqrt(2)})$) {$\bm{u}_y = 0$\\$\bm{u}_z = 0$};

	\draw[silh] (0,0,2) -- node[above=0.5] {$\bm{u} = (\bar{u},0,0)^T$} (20,0,2);
	\foreach \angle in {90,180,...,270}
	{
		\tdplotsinandcos{\sintheta}{\costheta}{\angle}%
		\draw[silh] (0,2*\sintheta,2*\costheta) -- (20,2*\sintheta,2*\costheta);
	}

	\draw[sline,<->] (0,0,-2.5) -- node[below=0.5] {$20\, R$} (20,0,-2.5);
	\draw[sline,<->] (0,0,-2.75) -- node[below=0.5] {$7\, R$} (7,0,-2.75);
	\draw[sline,<->,shorten >=1pt,shorten <=1pt] (0,0,-2) -- node[right=0.5] {$4\, R$} (0,0,2);
	\draw[sline,<->,shorten >=1pt,shorten <=1pt] (9,0,-1) -- node[right=0.5] {$2\, R$} (9,0,1);

	\draw[axis] (20,0,-4.5) -- (21,0,-4.5) node[anchor=north east]{$x$};
	\draw[axis] (20,0,-4.5) -- (20,1,-4.5) node[anchor=north west]{$y$};
	\draw[axis] (20,0,-4.5) -- (20,0,-3.5) node[anchor=south]{$z$};
\end{tikzpicture}
\end{center}
\caption{Illustration of the geometry and prescribed boundary conditions for the simulation of a uniform flow past a static sphere of radius $R$.}
\label{fig:spheregeom}
\end{figure}

All other terms on the left-hand side of the linear equation system arising from \eqref{eqn:weakformdisc} can be derived as usual.
The derivatives originating from the stabilization terms,
in particular the derivatives of $\tau_{cons} (\bm{u}^h \cdot\nabla) \bm{\Phi}^h$ with respect to $\bm{u}^h$,
are typically omitted, as they decrease the
robustness of the Newton--Raphson algorithm. Nonetheless, they are an essential ingredient for quadratic convergence, which will also be
discussed in the next section.

In our implementation we use an ILUT-preconditioned FGMRES implementation to solve the resulting linear systems
(cf. \cite{Saad1993,Saad1994}).

\section{Benchmarks}
\label{sec:benchmark}

In this section we will use the newly derived method to study two benchmarks:
the sedimenting sphere benchmark and a variation thereof with the sphere replaced by a tri-axial ellipsoid.

\subsection{Sedimenting sphere}

\begin{figure}[t]
\begin{center}
\includegraphics[width=\textwidth]{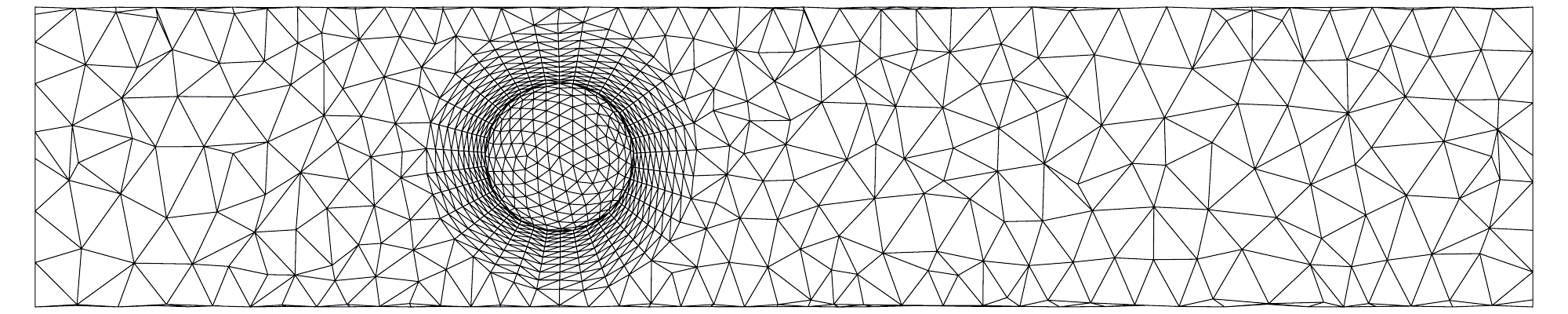}
\end{center}
\caption{Cut through the $xy$-plane of Mesh M1.}
\label{fig:cutmeshm1}
\end{figure}

\begin{table}[t]
\footnotesize
\centering
\begin{tabular}{lccc}
\toprule
 & M1 & M2 & M3 \\
\midrule
Number of elements on the sphere & 676 & 2602 & 9432 \\
Total number of nodes & 42788 & 217789 & 970454 \\
Total number of elements & 29791 & 157757 & 714417 \\
Krylov-space dimension & 150 & 300 $\mathrel{/}$ 350 & 400 \\
ILUT maximal fill-in $n_{ILUT}$ & 120 $\mathrel{/}$ 200 & 120 $\mathrel{/}$ 200 $\mathrel{/}$ 250 & 120 $\mathrel{/}$ 200 $\mathrel{/}$ 250 \\
ILUT threshold & $10^{-4}$ & $10^{-4}$ & $10^{-4}$ \\
Number of cores & 32 & 256  $\mathrel{/}$ 512 & 2048\\
\bottomrule
\end{tabular}
\caption{Mesh properties and solver parameters for the sedimenting sphere benchmark.}
\label{tab:meshproperties}
\end{table}

The sedimenting sphere in a tube benchmark is, in addition to the drag on confined cylinder benchmark, one of the classic benchmarks that has been
used in the past to measure the performance of numerical codes and different constitutive models.
It has been intensively studied experimentally, as well as numerically,
where numerous results for the upper-convected Maxwell model were obtained.
For a thorough review of the two aspects we refer to \cite{Mckinley2002,Owens2002} and the references
therein. Our analysis will be mostly centered around the Oldroyd-B model, which was already analyzed in
\cite{Lunsmann1993,Owens1996,Chauviere2000,Fan2003}, as well as the Giesekus model. In contrast to the just-mentioned literature, we will
not exploit the rotational symmetry in order to perform an in essence two-dimensional simulation of the three-dimensional problem, but will
solve the problem in three dimensions. The latter, although computationally more expensive, is of course more flexible
and preferred with the view on future applications.
As is commonly done, we will furthermore restrict ourselves to the simulation of the fully-developed flow condition,
where the sphere is sedimenting at constant speed, such that through a shift into the reference frame of the sphere,
we can reformulate the problem as
a stationary problem of a sphere at rest within a flow with uniform velocity $\bar{u}$.
Moreover, the gravitational force is neglected; with the exception of a missing buoyancy term in the pressure $p$,
this will not lead to any change of the flow field.

The geometry, as illustrated in Fig.~\ref{fig:spheregeom}, features a sphere of radius $R$. The sphere is located in the center of
a tube with radius $2R$ and is exposed to a uniform stream $\bm{u}=(\bar{u},0,0)^T$.
Based on the geometry, the flow conditions, and the relaxation time $\lambda$,
we define the Weissenberg number as
\begin{align*}
	Wi =& \frac{\lambda \bar{u}}{R}\, .
\end{align*}
It should be mentioned that the choice of the flow in $x$-direction is solely for the purpose of a better illustration.
The boundary conditions, as shown in Fig.~\ref{fig:spheregeom}, are a no-slip condition on the sphere, a
uniform stream of stress-free polymers ($\bm{\Psi}=0$) at the
inflow, and vanishing velocities perpendicular to the symmetry axis on the outflow. In accordance with the literature, only the
creeping flow limit ($Re=0$) is considered and the viscosity ratio is, in all conducted
simulations, chosen as $\beta = \mu_S/\mu = 0.5$.

As already indicated in the previous section, a tetrahedral $\mathbb{P}_2$ mesh was used to discretize the domain. A cut through the coarsest of the used
meshes can be seen in Fig.~\ref{fig:cutmeshm1}. All meshes feature a $0.9R$-thick boundary layer around the sphere in order to
properly resolve steep gradients. Further mesh properties as well as the linear solver parameters can be taken from Tab.~\ref{tab:meshproperties}.
Moreover, during the calculations the Weissenberg number was consecutively increased in such a way that the last result always served
as an initial guess for the following run.
Run times for a single simulation range approximately from $400\,\mbox{s}$ to $700\,\mbox{s}$ wall-clock time for the Meshes M1
and M2 on the Intel-based RWTH cluster.
For the finest mesh, the IBM-based Juqueen computer was used, resulting in run times of $1300 - 2200 \,\mbox{s}$.

\subsubsection{Oldroyd-B model}

\begin{table}[t]
\footnotesize
\centering
\begin{tabular}{cccccccc}
\toprule
\multirow{2}[3]{*}{$Wi$} & \multicolumn{6}{c}{$K$}\\
\cmidrule(lr){2-8}
 & M1 & M2 & M3 & \cite{Lunsmann1993} &  \cite{Owens1996} & \cite{Chauviere2000} & \cite{Fan2003} \\
\midrule
0.1 & 5.90022 & 5.90472 & 5.90576 & & & & \\
0.2 & 5.80240 & 5.80646 & 5.80763 & & & & \\
0.3 & 5.68858 & 5.69227 & 5.69356 & 5.69368 & 5.6963 & & \\
0.4 & 5.58068 & 5.58390 & 5.58527 & & & & \\
0.5 & 5.48692 & 5.48953 & 5.49093 & & & 5.4852 & \\
0.6 & 5.40899 & 5.41086 & 5.41227 & 5.41225 & 5.4117 & 5.4009 & \\
0.7 & 5.34592 & 5.34700 & 5.34838 & & & 5.3411 & \\
0.8 & 5.29582 & 5.29616 & 5.29747 & & & 5.2945 & \\
0.9 & 5.25660 & 5.25639 & 5.25761 & 5.25717 & & 5.2518 & \\
1.0 & 5.22628 & 5.22586 & 5.22700 & & & 5.2240 & \\
1.1 & 5.20312 & 5.20292 & 5.20402 & & & 5.2029 & \\
1.2 & 5.18568 & 5.18619 & 5.18733 & 5.18648 & & 5.1842 & 5.1877 \\
1.3 & 5.17278 & 5.17449 & 5.17581 & & & & 5.1763 \\
1.4 & 5.16354 & 5.16689 & 5.16851 & & & & \\
1.5 & 5.15723 & 5.16261 &               & 5.15293 & & & \\
\bottomrule
\end{tabular}
\caption{Results for the correction factor $K$ of the drag on the sphere when using the Oldroyd-B model.}
\label{tab:dragsphereoldroydb}
\end{table}

\begin{figure}[!t]
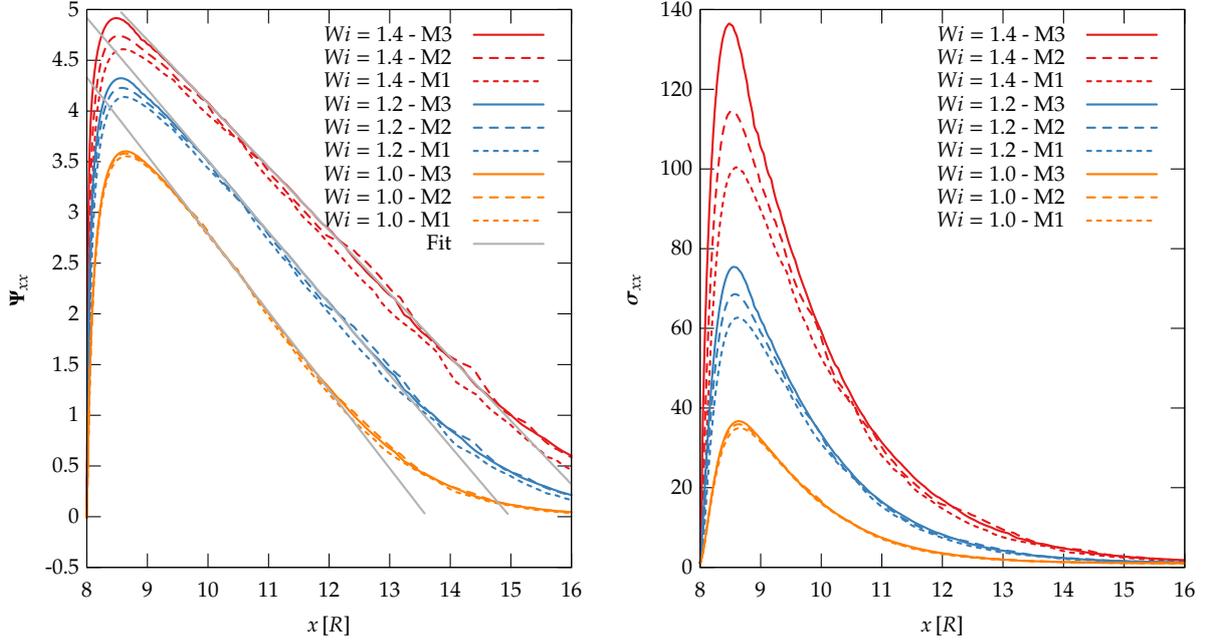

\footnotesize
\centering
\noindent
\begin{subfigure}[b]{0.49\textwidth}
\import{Fig/}{SpherePsixxWake.tex}
\end{subfigure}
\begin{subfigure}[b]{0.49\textwidth}
\import{Fig/}{SphereSigmaxxWake.tex}
\end{subfigure}
\caption{$\bm{\Psi}_{xx}$ and $\bm{\sigma}_{xx}$ plotted along the centerline in the wake of the sphere.}
\label{fig:spherepsixx_oldroydb}
\end{figure}

The use of the described benchmark in conjunction with the Oldroyd-B model has been covered extensively in literature.
One widely recognized performance quantity is the wall correction factor $K$, which is given as the
ratio of the drag force on the sphere to the Stokesian drag of a sphere in an unbounded Newtonian fluid
\begin{align}
\label{eqn:dragcorrection}
	K =& \frac{1}{6\pi\mu R\bar{u}} \int_{\Gamma_{Sphere}} \bm{e}_x^T \left[-p\bm{1} + 2 \mu_S \varepsilon (\bm{u})
			+ \frac{\mu_P}{\lambda}\left(e^{\bm{\Psi}}-\bm{1}\right)\right] \bm{n}\, .
\end{align}
Here, $\bm{n}$ denotes the unit normal field on the sphere, as usual.

The results of the simulations as presented in Tab.~\ref{tab:dragsphereoldroydb} match the results in literature quite well:
Generally convergence can --- independent of mesh size --- be claimed up to a Weissenberg number of $Wi=1.4$.
Above this threshold, the conditioning of the linearized system regresses. This can be mitigated only to a certain extent by
an increased number of GMRES iterations and an increased ILUT fill-in, but otherwise usually leads to a failure of the simulation.

It should be noted that the drag might not necessarily be the best benchmark quantity to measure the performance of
numerical discretizations, which may yield perfect drag results while not being able to properly predict other important flow characteristics.
One of these characteristics is the extensional flow in the wake of the sphere, where the polymers are stretched along the flow direction.
This is of special importance for the Oldroyd-B model, which, as it corresponds to the microscopic Hookean-dumbbell model,
has the property that the solution blows up in a purely extensional flow if the extensional rate exceeds a critical point ---
simply put, the dumbbells become infinitely long. Although there has not been a conclusive proof in literature yet,
it is believed that a similar mechanism is also responsible for the limitation
in the Weissenberg number for the feasible simulations in the falling sphere benchmark. To highlight this similarity,
notice that symmetry dictates
$\bm{\Psi}$ and $\nabla\bm{u}$ to be diagonal along the centerline, such that the constitutive equation
of $\bm{\Psi}_{xx}$ in Eq.~\eqref{eqn:logconf} reduces to
\begin{align}\label{eqn:centerlinepsixx}
	\bm{u}_x \partial_x \bm{\Psi}_{xx} - 2 \partial_x \bm{u}_x = - \frac{1}{\lambda} \left(1-e^{-\bm{\Psi}_{xx}}\right)\, .
\end{align}
Considering that at any extremal point $x^*$ of $\bm{\Psi}_{xx}$ the derivative has to vanish, rearranging this equation yields
\begin{align*}
	\bm{\Psi}_{xx} (x^*) =& - \log \left(1-2\lambda\, \partial_x \bm{u}_x(x^*)\right)\, .
\end{align*}
Thus, with $\partial_x \bm{u}_x(x^*)$ approaching $1/(2\lambda)$ the component $\bm{\Psi}_{xx}$ blows up.
Of course, nothing particular on the behavior of $\partial_x \bm{u}_x$ can be inferred within the framework of one-dimensional analysis due
to the incompressibility constraint.

The $\bm{\Psi}_{xx}$ actually predicted by the simulation can be seen in Fig.~\ref{fig:spherepsixx_oldroydb}. One of the points that becomes directly
apparent is that in these simulations mesh convergence can only be claimed up to Weissenberg numbers $Wi=1.0-1.2$. Above these values, it seems that
despite a boundary-layer-resolving mesh, the fluid characteristics in that region still cannot be accurately described.
This effect becomes even more pronounced looking
at $\bm{\sigma}_{xx}$, which modulo numerical noise is given by $\bm{\sigma}_{xx} = \exp \bm{\Psi}_{xx}$ and is also
depicted in Fig.~\ref{fig:spherepsixx_oldroydb}. There, of course, a slight deviation of an already large $\bm{\Psi}_{xx}$ is further amplified by
the exponential function. Furthermore, it should also be noted that the point where $\bm{\Psi}_{xx}$ attains its maximum seems to reach its maximal
$x$-value at $Wi=0.8$. For higher Weissenberg numbers the maximum is then shifted again in the direction of the sphere.

Considering these peculiarities and the general non-linearity of the governing equations, it is even more remarkable that on the downward slope of
$\bm{\Psi}_{xx}$ in Fig.~\ref{fig:spherepsixx_oldroydb} the field shows a linear behavior. Performing a least squares fit of linear curves to the
simulated data in the region $x=10R-12R$ on Mesh M3 yields slopes of $m=-0.6259\, R^{-1}$ for $Wi=1.4$, $m=-0.7037\, R^{-1}$ for $Wi=1.2$, and
$m=-0.7708\, R^{-1}$ for $Wi=1.0$. The resulting linear curves can be examined in Fig.~\ref{fig:spherepsixx_oldroydb}. It is yet unclear
which mechanism leads to this linear behavior.

\subsubsection{Giesekus model}

\begin{table}[!tb]
\footnotesize
\centering
\begin{tabular}{cccccccccc}
\toprule
 & \multicolumn{9}{c}{$K$}\\
\cmidrule(lr){2-10}
$Wi$ & \multicolumn{3}{c}{$\alpha=0.001$} & \multicolumn{3}{c}{$\alpha=0.01$} & \multicolumn{3}{c}{$\alpha=0.1$}\\
\cmidrule(lr){2-4} \cmidrule(lr){5-7} \cmidrule(lr){8-10}
 & M1 & M2 & M3 & M1 & M2 & M3 & M1 & M2 & M3 \\
\midrule
0.1  & 5.89918 & 5.90369 & 5.90473 & 5.88997 & 5.89464 & 5.89573 & 5.81454 & 5.82032 & 5.82166 \\
0.2  & 5.79863 & 5.80274 & 5.80393 & 5.76691 & 5.77147 & 5.77275 & 5.56351 & 5.57002 & 5.57160 \\
0.3  & 5.68098 & 5.68479 & 5.68610 & 5.62095 & 5.62552 & 5.62694 & 5.31523 & 5.32188 & 5.32349 \\
0.4  & 5.56845 & 5.57185 & 5.57324 & 5.47847 & 5.48300 & 5.48451 & 5.09969 & 5.10625 & 5.10785 \\
0.5  & 5.46934 & 5.47222 & 5.47366 & 5.34928 & 5.35374 & 5.35531 & 4.91688 & 4.92331 & 4.92489 \\
0.6  & 5.38538 & 5.38763 & 5.38910 & 5.23534 & 5.23965 & 5.24127 & 4.76150 & 4.76781 & 4.76938 \\
0.7  & 5.31561 & 5.31714 & 5.31861 & 5.13538 & 5.13952 & 5.14118 & 4.62842 & 4.63461 & 4.63616 \\
0.8  & 5.25811 & 5.25891 & 5.26037 & 5.04716 & 5.05109 & 5.05280 & 4.51345 & 4.51955 & 4.52109 \\
0.9  & 5.21075 & 5.21091 & 5.21235 & 4.96839 & 4.97211 & 4.97387 & 4.41335 & 4.41936 & 4.42090 \\
1.0  & 5.17150 & 5.17122 & 5.17264 & 4.89714 & 4.90066 & 4.90248 & 4.32556 & 4.33150 & 4.33303 \\
1.1  & 5.13857 & 5.13811 & 5.13955 & 4.83192 & 4.83528 & 4.83716 & 4.24805 & 4.25394 & 4.25545 \\
1.2  & 5.11050 & 5.11012 & 5.11165 & 4.77169 & 4.77491 & 4.77684 & 4.17920 & 4.18503 & 4.18653 \\
1.3  & 5.08611 & 5.08608 & 5.08774 & 4.71569 & 4.71879 & 4.72077 & 4.11769 & 4.12347 & 4.12496 \\
1.4  & 5.06448 & 5.06501 & 5.06688 & 4.66336 & 4.66637 & 4.66840 & 4.06245 & 4.06818 & 4.06966 \\
1.5  & 5.04489 & 5.04617 & 5.04829 & 4.61432 & 4.61725 & 4.61931 & 4.01260 & 4.01828 & 4.01975 \\
1.6  &               & 5.02897 & 5.03139 & 4.56825 & 4.57111 & 4.57319 & 3.96740 & 3.97303 & 3.97448 \\
1.8  &               &               &               & 4.48402 & 4.48676 & 4.48886 & 3.88863 & 3.89413 & 3.89557 \\
2.0  &               &               &               & 4.40900 & 4.41166 & 4.41375 & 3.82234 & 3.82771 & 3.82914 \\
2.2  &               &               &               & 4.34189 & 4.34447 & 4.34653 & 3.76580 & 3.77103 & 3.77245 \\
2.4  &               &               &               & 4.28156 & 4.28409 & 4.28612 & 3.71703 & 3.72211 & 3.72352 \\
2.6  &               &               &               & 4.22709 & 4.22960 & 4.23159 & 3.67452 & 3.67943 & 3.68085 \\
3.0  &               &               &               & 4.13269 & 4.13526 & 4.13716 & 3.60400 & 3.60857 & 3.61001 \\
3.5  &               &               &               & 4.03595 & 4.03881 & 4.04057 & 3.53552 & 3.53969 & 3.54117 \\
4.0  &               &               &               & 3.95673 & 3.96008 & 3.96170 & 3.48206 & 3.48589 & 3.48740 \\
4.5  &               &               &               & 3.89055 & 3.89454 & 3.89602 & 3.43910 & 3.44264 & 3.44418 \\
5.0  &               &               &               & 3.83433 & 3.83905 & 3.84040 & 3.40376 & 3.40707 & 3.40864 \\
5.5  &               &               &               & 3.78591 & 3.79140 & 3.79265 & 3.37414 & 3.37728 & 3.37886 \\
6.5  &               &               &               & 3.70655 & 3.71357 & 3.71469 & 3.32716 & 3.33010 & 3.33166 \\
7.5  &               &               &               & 3.64403 & 3.65246 & 3.65355 & 3.29144 & 3.29432 & 3.29584 \\
8.0  &               &               &               & 3.61743 & 3.62651 & 3.62762 & 3.27658 & 3.27947 & 3.28096 \\
8.5  &               &               &               & 3.59334 & 3.60301 & 3.60415 & 3.26328 & 3.26619 & 3.26765 \\
9.0  &               &               &               & 3.57140 & 3.58160 & 3.58280 & 3.25130 & 3.25424 & 3.25567 \\
9.5  &               &               &               & 3.55134 & 3.56202 & 3.56328 & 3.24044 & 3.24344 & 3.24482 \\
10.0 &               &               &               & 3.53290 & 3.54402 & 3.54535 & 3.23055 & 3.23360 & 3.23495 \\
11.0 &               &               &               & 3.50015 &  & 3.51352 & 3.21318 & 3.21637 & 3.21764 \\
12.0 &               &               &               & 3.47192 &  & 3.48607 & 3.19839 & 3.20173 & 3.20293 \\
13.0 &               &               &               & 3.44729 &  & 3.46211 & 3.18564 & 3.18914 & 3.19027 \\
14.0 &               &               &               & 3.42558 &  & 3.44098 & 3.17452 & 3.17816 & 3.17923 \\
15.0 &               &               &               & 3.40628 &  & 3.42218 & 3.16472 & 3.16851 & 3.16952 \\
\bottomrule
\end{tabular}
\caption{Results for the correction factor $K$ of the drag on the sphere when using the Giesekus model.}
\label{tab:dragspheregiesekus}
\end{table}

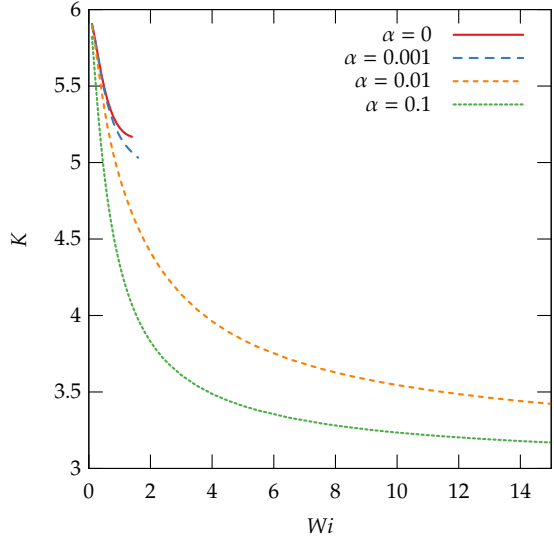
\begin{figure}[t]
\footnotesize
\centering
\noindent
\begin{tikzpicture}[gnuplot]
\gpcolor{color=gp lt color border}
\gpsetlinetype{gp lt border}
\gpsetlinewidth{1.00}
\draw[gp path] (2.592,0.985)--(2.772,0.985);
\draw[gp path] (8.676,0.985)--(8.496,0.985);
\node[gp node right] at (2.592,0.985) { 3};
\draw[gp path] (2.592,1.999)--(2.772,1.999);
\draw[gp path] (8.676,1.999)--(8.496,1.999);
\node[gp node right] at (2.592,1.999) { 3.5};
\draw[gp path] (2.592,3.013)--(2.772,3.013);
\draw[gp path] (8.676,3.013)--(8.496,3.013);
\node[gp node right] at (2.592,3.013) { 4};
\draw[gp path] (2.592,4.027)--(2.772,4.027);
\draw[gp path] (8.676,4.027)--(8.496,4.027);
\node[gp node right] at (2.592,4.027) { 4.5};
\draw[gp path] (2.592,5.041)--(2.772,5.041);
\draw[gp path] (8.676,5.041)--(8.496,5.041);
\node[gp node right] at (2.592,5.041) { 5};
\draw[gp path] (2.592,6.055)--(2.772,6.055);
\draw[gp path] (8.676,6.055)--(8.496,6.055);
\node[gp node right] at (2.592,6.055) { 5.5};
\draw[gp path] (2.592,7.069)--(2.772,7.069);
\draw[gp path] (8.676,7.069)--(8.496,7.069);
\node[gp node right] at (2.592,7.069) { 6};
\draw[gp path] (2.592,0.985)--(2.592,1.165);
\draw[gp path] (2.592,7.069)--(2.592,6.889);
\node[gp node center] at (2.592,0.677) { 0};
\draw[gp path] (3.403,0.985)--(3.403,1.165);
\draw[gp path] (3.403,7.069)--(3.403,6.889);
\node[gp node center] at (3.403,0.677) { 2};
\draw[gp path] (4.214,0.985)--(4.214,1.165);
\draw[gp path] (4.214,7.069)--(4.214,6.889);
\node[gp node center] at (4.214,0.677) { 4};
\draw[gp path] (5.026,0.985)--(5.026,1.165);
\draw[gp path] (5.026,7.069)--(5.026,6.889);
\node[gp node center] at (5.026,0.677) { 6};
\draw[gp path] (5.837,0.985)--(5.837,1.165);
\draw[gp path] (5.837,7.069)--(5.837,6.889);
\node[gp node center] at (5.837,0.677) { 8};
\draw[gp path] (6.648,0.985)--(6.648,1.165);
\draw[gp path] (6.648,7.069)--(6.648,6.889);
\node[gp node center] at (6.648,0.677) { 10};
\draw[gp path] (7.459,0.985)--(7.459,1.165);
\draw[gp path] (7.459,7.069)--(7.459,6.889);
\node[gp node center] at (7.459,0.677) { 12};
\draw[gp path] (8.270,0.985)--(8.270,1.165);
\draw[gp path] (8.270,7.069)--(8.270,6.889);
\node[gp node center] at (8.270,0.677) { 14};
\draw[gp path] (2.592,7.069)--(2.592,0.985)--(8.676,0.985)--(8.676,7.069)--cycle;
\node[gp node center,rotate=-270] at (1.702,4.027) {$K$};
\node[gp node center] at (5.634,0.215) {$Wi$};
\node[gp node right] at (7.208,6.735) {$\alpha=0$};
\gpcolor{rgb color={0.894,0.102,0.110}}
\gpsetlinetype{gp lt plot 0}
\gpsetlinewidth{2.00}
\draw[gp path] (7.392,6.735)--(8.308,6.735);
\draw[gp path] (2.633,6.878)--(2.673,6.679)--(2.714,6.448)--(2.754,6.228)--(2.795,6.037)%
  --(2.835,5.877)--(2.876,5.748)--(2.916,5.644)--(2.957,5.563)--(2.998,5.501)--(3.038,5.455)%
  --(3.079,5.421)--(3.119,5.398)--(3.160,5.383);
\gpcolor{color=gp lt color border}
\node[gp node right] at (7.208,6.427) {$\alpha=0.001$};
\gpcolor{rgb color={0.216,0.494,0.722}}
\gpsetlinetype{gp lt plot 1}
\draw[gp path] (7.392,6.427)--(8.308,6.427);
\draw[gp path] (2.633,6.876)--(2.673,6.671)--(2.714,6.432)--(2.754,6.204)--(2.795,6.002)%
  --(2.835,5.830)--(2.876,5.687)--(2.916,5.569)--(2.957,5.472)--(2.998,5.391)--(3.038,5.324)%
  --(3.079,5.267)--(3.119,5.219)--(3.160,5.177)--(3.200,5.139)--(3.241,5.105);
\gpcolor{color=gp lt color border}
\node[gp node right] at (7.208,6.119) {$\alpha=0.01$};
\gpcolor{rgb color={1.000,0.498,0.000}}
\gpsetlinetype{gp lt plot 2}
\draw[gp path] (7.392,6.119)--(8.308,6.119);
\draw[gp path] (2.633,6.858)--(2.673,6.608)--(2.714,6.312)--(2.754,6.024)--(2.795,5.762)%
  --(2.835,5.530)--(2.876,5.327)--(2.916,5.148)--(2.957,4.988)--(2.998,4.843)--(3.038,4.711)%
  --(3.079,4.588)--(3.119,4.475)--(3.160,4.369)--(3.200,4.269)--(3.241,4.175)--(3.322,4.004)%
  --(3.403,3.852)--(3.484,3.716)--(3.565,3.593)--(3.647,3.483)--(3.809,3.291)--(4.012,3.095)%
  --(4.214,2.935)--(4.417,2.802)--(4.620,2.689)--(4.823,2.592)--(5.228,2.434)--(5.634,2.310)%
  --(5.837,2.258)--(6.040,2.210)--(6.242,2.167)--(6.445,2.127)--(6.648,2.091)--(7.054,2.026)%
  --(7.459,1.971)--(7.865,1.922)--(8.270,1.879)--(8.676,1.841);
\gpcolor{color=gp lt color border}
\node[gp node right] at (7.208,5.811) {$\alpha=0.1$};
\gpcolor{rgb color={0.302,0.686,0.290}}
\gpsetlinetype{gp lt plot 3}
\draw[gp path] (7.392,5.811)--(8.308,5.811);
\draw[gp path] (2.633,6.707)--(2.673,6.200)--(2.714,5.697)--(2.754,5.260)--(2.795,4.889)%
  --(2.835,4.573)--(2.876,4.303)--(2.916,4.070)--(2.957,3.867)--(2.998,3.688)--(3.038,3.531)%
  --(3.079,3.391)--(3.119,3.266)--(3.160,3.154)--(3.200,3.053)--(3.241,2.961)--(3.322,2.801)%
  --(3.403,2.666)--(3.484,2.552)--(3.565,2.452)--(3.647,2.366)--(3.809,2.222)--(4.012,2.082)%
  --(4.214,1.973)--(4.417,1.886)--(4.620,1.814)--(4.823,1.753)--(5.228,1.658)--(5.634,1.585)%
  --(5.837,1.555)--(6.040,1.528)--(6.242,1.503)--(6.445,1.481)--(6.648,1.461)--(7.054,1.426)%
  --(7.459,1.397)--(7.865,1.371)--(8.270,1.348)--(8.676,1.329);
\gpcolor{color=gp lt color border}
\gpsetlinetype{gp lt border}
\gpsetlinewidth{1.00}
\draw[gp path] (2.592,7.069)--(2.592,0.985)--(8.676,0.985)--(8.676,7.069)--cycle;
\gpdefrectangularnode{gp plot 1}{\pgfpoint{2.592cm}{0.985cm}}{\pgfpoint{8.676cm}{7.069cm}}
\end{tikzpicture}
\caption{Wall correction factor $K$ plotted for different values of the mobility $\alpha$, computed on the Mesh M3.}
\label{fig:dragsphere_gies}
\end{figure}

A similar analysis as for the Oldroyd-B model is also conducted for the Giesekus model that extends the Oldroyd-B model by
an additional term. In fact, the Oldroyd-B model is a special case of the Giesekus model for a vanishing mobility parameter $\alpha=0$.

As before, the quantity studied first is the drag correction factor $K$ for several Weissenberg numbers $Wi$
and varying mobility $\alpha$. The results are collected in Tab.~\ref{tab:dragspheregiesekus} and depicted in
Fig.~\ref{fig:dragsphere_gies}. It is notable that, for the smallest $\alpha=0.001$, the model exhibits similar numerical behavior as the
Oldroyd-B model, namely convergent results only up to $Wi=1.6$. This is to be expected,
higher Weissenberg numbers may be achievable with finer meshes in contrast to the Oldroyd-B model. Increasing $\alpha$ shows that
the drag on the sphere decreases in general, which is attributable to the shear-thinning properties of a Giesekus fluid.
Moreover, for all performed numerical calculations the drag is monotonically decreasing with
increasing Weissenberg number $Wi$, and there are indications that $K$ reaches a plateau for sufficiently high $Wi$.

\begin{figure}[!b]
\footnotesize
\centering
\noindent
\includegraphics{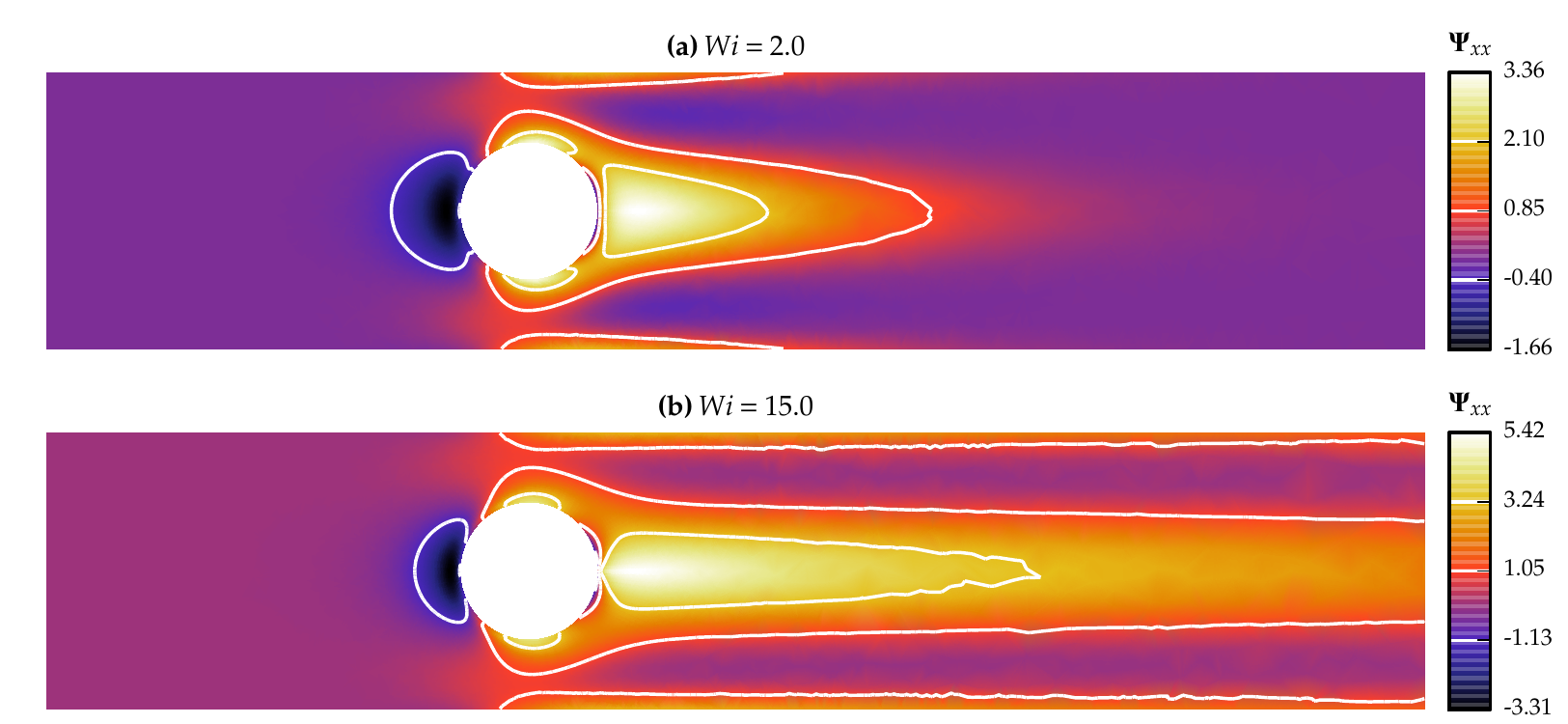}
\caption{Cut through the $xy$-plane of Mesh M3, illustrating $\bm{\Psi}_{xx}$ for different Weissenberg numbers and $\alpha = 0.1$.}
\label{fig:spherepsixx_a01}
\end{figure}

Looking at the extensional flow characteristics of the Giesekus model in the wake of the sphere, it already becomes apparent from the equivalent of
Eq.~\eqref{eqn:centerlinepsixx} that the model is better-behaved:
\begin{align*}
	\bm{u}_x \partial_x \bm{\Psi}_{xx} - 2 \partial_x \bm{u}_x = - \frac{1}{\lambda} \left(1-2\alpha
		-(1-\alpha) e^{-\bm{\Psi}_{xx}} + \alpha e^{\bm{\Psi}_{xx}}\right)\, .
\end{align*}
Here, the additional $\alpha \exp(\bm{\Psi}_{xx})$ term can potentially compensate an increase of $\partial_x \bm{u}_x$
exceeding $1/(2\lambda)$, thus limiting the increase of $\bm{\Psi}_{xx}$.
The resulting computations of $\bm{\Psi}_{xx}$ for two different Weissenberg numbers $Wi=2.0$ and $Wi=15.0$ are shown in Fig.~\ref{fig:spherepsixx_a01}.
The results reflect clearly that with increasing Weissenberg number, the polymers need more time to relax to their stress-free state,
which means that they are transported further downstream before they reach this state. As such, the $\bm{\Psi}_{xx}$ contours also extent further
downstream for higher Weissenberg numbers than for lower ones.
As a consequence the demands on the used geometry and meshes increase: They need
to sustain a high refinement level over a larger region in the wake of the sphere.

\begin{figure}[t]
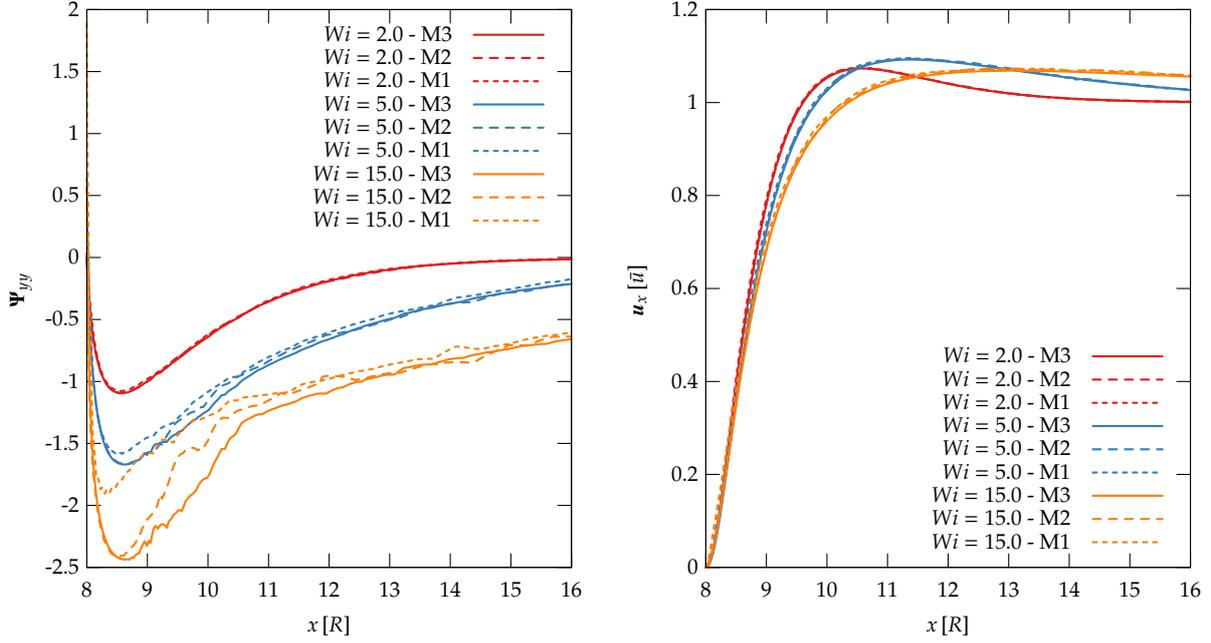

\footnotesize
\centering
\noindent
\begin{subfigure}[b]{0.49\textwidth}
\import{Fig/}{SpherePsiyyWake_a01.tex}
\end{subfigure}
\begin{subfigure}[b]{0.49\textwidth}
\import{Fig/}{SphereVelWake.tex}
\end{subfigure}
\caption{$\bm{\Psi}_{yy}$ and $\bm{u}_x$ plotted along the centerline in the wake of the sphere for $\alpha=0.1$.}
\label{fig:sphere_psiyy_vel}
\end{figure}

This effect becomes even more noticeable when considering the other degrees of freedom in our simulation. In Fig.~\ref{fig:sphere_psiyy_vel},
$\bm{\Psi}_{yy}$ has been plotted along the centerline for different Weissenberg numbers. The first point to notice is that
mesh convergence can be reached within the boundary-layer-adjusted mesh around the sphere, but as soon as the mesh resolution decreases,
the accuracy in the to-be-predicted degree of freedom $\bm{\Psi}_{yy}$ is lost. The impact becomes more severe the higher
the Weissenberg number is. In addition, by inspecting Fig.~\ref{fig:sphere_psiyy_vel}, it seems that for $Wi=15.0$, $\bm{\Psi}_{yy}$ exhibits
a small kink around $x=10.5\, R$ on Mesh M3, which may be attributable to a still insufficient refinement level of the mesh in that particular region.

On the other hand, the fact that $\bm{\Psi}_{yy}$ is negative also means that errors therein are exponentially damped in
$\bm{\sigma}_{yy}= \exp \bm{\Psi}_{yy}$. Since the latter is what essentially contributes to the momentum equation, it is not much
of a surprise that the velocity component depicted in Fig.~\ref{fig:sphere_psiyy_vel} is still smooth for all used meshes.
Furthermore, velocity overshoots exceeding $\bar{u}$ are clearly visible, in contrast to Oldroyd-B
simulations. Moreover, the downstream relaxation is once more delayed with increasing Weissenberg number.

\subsubsection{Performance of the Newton--Raphson algorithm}

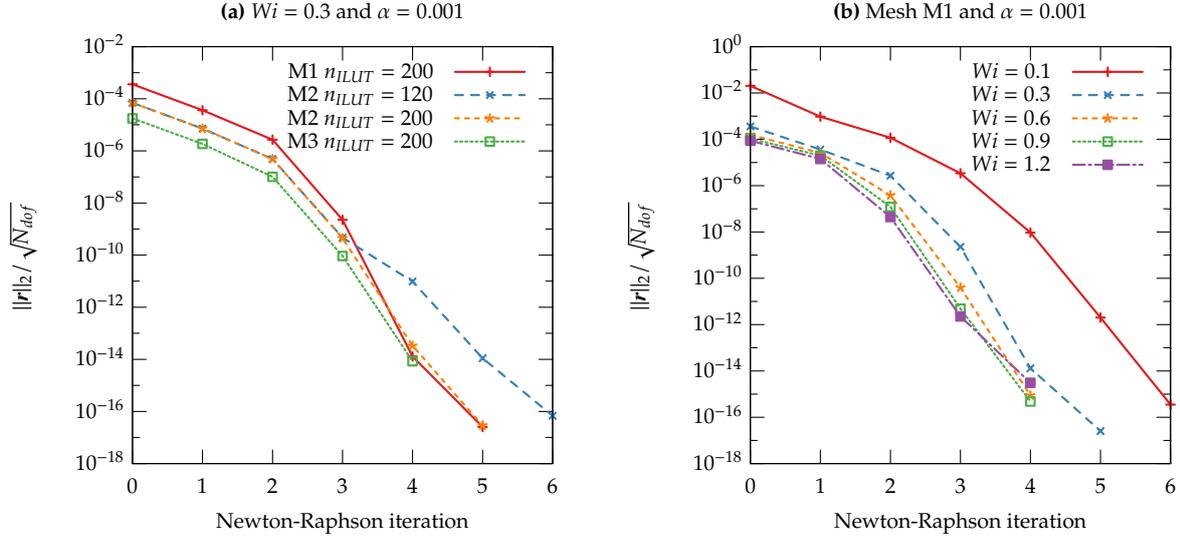
\begin{figure}[!t]
\footnotesize
\centering
\noindent
\begin{subfigure}[b]{0.49\textwidth}
\begin{tikzpicture}[gnuplot]
\gpcolor{color=gp lt color border}
\gpsetlinetype{gp lt border}
\gpsetlinewidth{1.00}
\draw[gp path] (3.116,0.985)--(3.296,0.985);
\draw[gp path] (8.644,0.985)--(8.464,0.985);
\node[gp node right] at (3.116,0.985) {$10^{-18}$};
\draw[gp path] (3.116,1.331)--(3.206,1.331);
\draw[gp path] (8.644,1.331)--(8.554,1.331);
\draw[gp path] (3.116,1.676)--(3.296,1.676);
\draw[gp path] (8.644,1.676)--(8.464,1.676);
\node[gp node right] at (3.116,1.676) {$10^{-16}$};
\draw[gp path] (3.116,2.022)--(3.206,2.022);
\draw[gp path] (8.644,2.022)--(8.554,2.022);
\draw[gp path] (3.116,2.367)--(3.296,2.367);
\draw[gp path] (8.644,2.367)--(8.464,2.367);
\node[gp node right] at (3.116,2.367) {$10^{-14}$};
\draw[gp path] (3.116,2.713)--(3.206,2.713);
\draw[gp path] (8.644,2.713)--(8.554,2.713);
\draw[gp path] (3.116,3.058)--(3.296,3.058);
\draw[gp path] (8.644,3.058)--(8.464,3.058);
\node[gp node right] at (3.116,3.058) {$10^{-12}$};
\draw[gp path] (3.116,3.404)--(3.206,3.404);
\draw[gp path] (8.644,3.404)--(8.554,3.404);
\draw[gp path] (3.116,3.749)--(3.296,3.749);
\draw[gp path] (8.644,3.749)--(8.464,3.749);
\node[gp node right] at (3.116,3.749) {$10^{-10}$};
\draw[gp path] (3.116,4.095)--(3.206,4.095);
\draw[gp path] (8.644,4.095)--(8.554,4.095);
\draw[gp path] (3.116,4.440)--(3.296,4.440);
\draw[gp path] (8.644,4.440)--(8.464,4.440);
\node[gp node right] at (3.116,4.440) {$10^{-8}$};
\draw[gp path] (3.116,4.786)--(3.206,4.786);
\draw[gp path] (8.644,4.786)--(8.554,4.786);
\draw[gp path] (3.116,5.131)--(3.296,5.131);
\draw[gp path] (8.644,5.131)--(8.464,5.131);
\node[gp node right] at (3.116,5.131) {$10^{-6}$};
\draw[gp path] (3.116,5.477)--(3.206,5.477);
\draw[gp path] (8.644,5.477)--(8.554,5.477);
\draw[gp path] (3.116,5.822)--(3.296,5.822);
\draw[gp path] (8.644,5.822)--(8.464,5.822);
\node[gp node right] at (3.116,5.822) {$10^{-4}$};
\draw[gp path] (3.116,6.168)--(3.206,6.168);
\draw[gp path] (8.644,6.168)--(8.554,6.168);
\draw[gp path] (3.116,6.513)--(3.296,6.513);
\draw[gp path] (8.644,6.513)--(8.464,6.513);
\node[gp node right] at (3.116,6.513) {$10^{-2}$};
\draw[gp path] (3.116,0.985)--(3.116,1.165);
\draw[gp path] (3.116,6.513)--(3.116,6.333);
\node[gp node center] at (3.116,0.677) { 0};
\draw[gp path] (4.037,0.985)--(4.037,1.165);
\draw[gp path] (4.037,6.513)--(4.037,6.333);
\node[gp node center] at (4.037,0.677) { 1};
\draw[gp path] (4.959,0.985)--(4.959,1.165);
\draw[gp path] (4.959,6.513)--(4.959,6.333);
\node[gp node center] at (4.959,0.677) { 2};
\draw[gp path] (5.880,0.985)--(5.880,1.165);
\draw[gp path] (5.880,6.513)--(5.880,6.333);
\node[gp node center] at (5.880,0.677) { 3};
\draw[gp path] (6.801,0.985)--(6.801,1.165);
\draw[gp path] (6.801,6.513)--(6.801,6.333);
\node[gp node center] at (6.801,0.677) { 4};
\draw[gp path] (7.723,0.985)--(7.723,1.165);
\draw[gp path] (7.723,6.513)--(7.723,6.333);
\node[gp node center] at (7.723,0.677) { 5};
\draw[gp path] (8.644,0.985)--(8.644,1.165);
\draw[gp path] (8.644,6.513)--(8.644,6.333);
\node[gp node center] at (8.644,0.677) { 6};
\draw[gp path] (3.116,6.513)--(3.116,0.985)--(8.644,0.985)--(8.644,6.513)--cycle;
\node[gp node center,rotate=-270] at (1.674,3.749) {$||\bm{r}||_2/\sqrt{N_{dof}}$};
\node[gp node center] at (5.880,0.215) {Newton-Raphson iteration};
\node[gp node center] at (5.880,6.975) {\textbf{(a)} $Wi=0.3$ and $\alpha=0.001$};
\node[gp node right] at (7.176,6.179) {M1 $n_{ILUT} = 200$};
\gpcolor{rgb color={0.894,0.102,0.110}}
\gpsetlinetype{gp lt plot 0}
\gpsetlinewidth{2.00}
\draw[gp path] (7.360,6.179)--(8.276,6.179);
\draw[gp path] (3.116,6.014)--(4.037,5.670)--(4.959,5.279)--(5.880,4.217)--(6.801,2.406)%
  --(7.723,1.470);
\gpsetpointsize{4.00}
\gppoint{gp mark 1}{(3.116,6.014)}
\gppoint{gp mark 1}{(4.037,5.670)}
\gppoint{gp mark 1}{(4.959,5.279)}
\gppoint{gp mark 1}{(5.880,4.217)}
\gppoint{gp mark 1}{(6.801,2.406)}
\gppoint{gp mark 1}{(7.723,1.470)}
\gppoint{gp mark 1}{(7.818,6.179)}
\gpcolor{color=gp lt color border}
\node[gp node right] at (7.176,5.871) {M2 $n_{ILUT} = 120$};
\gpcolor{rgb color={0.216,0.494,0.722}}
\gpsetlinetype{gp lt plot 1}
\draw[gp path] (7.360,5.871)--(8.276,5.871);
\draw[gp path] (3.116,5.768)--(4.037,5.427)--(4.959,5.024)--(5.880,3.981)--(6.801,3.400)%
  --(7.723,2.382)--(8.644,1.619);
\gppoint{gp mark 2}{(3.116,5.768)}
\gppoint{gp mark 2}{(4.037,5.427)}
\gppoint{gp mark 2}{(4.959,5.024)}
\gppoint{gp mark 2}{(5.880,3.981)}
\gppoint{gp mark 2}{(6.801,3.400)}
\gppoint{gp mark 2}{(7.723,2.382)}
\gppoint{gp mark 2}{(8.644,1.619)}
\gppoint{gp mark 2}{(7.818,5.871)}
\gpcolor{color=gp lt color border}
\node[gp node right] at (7.176,5.563) {M2 $n_{ILUT} = 200$};
\gpcolor{rgb color={1.000,0.498,0.000}}
\gpsetlinetype{gp lt plot 2}
\draw[gp path] (7.360,5.563)--(8.276,5.563);
\draw[gp path] (3.116,5.768)--(4.037,5.427)--(4.959,5.024)--(5.880,3.973)--(6.801,2.547)%
  --(7.723,1.491);
\gppoint{gp mark 3}{(3.116,5.768)}
\gppoint{gp mark 3}{(4.037,5.427)}
\gppoint{gp mark 3}{(4.959,5.024)}
\gppoint{gp mark 3}{(5.880,3.973)}
\gppoint{gp mark 3}{(6.801,2.547)}
\gppoint{gp mark 3}{(7.723,1.491)}
\gppoint{gp mark 3}{(7.818,5.563)}
\gpcolor{color=gp lt color border}
\node[gp node right] at (7.176,5.255) {M3 $n_{ILUT} = 200$};
\gpcolor{rgb color={0.302,0.686,0.290}}
\gpsetlinetype{gp lt plot 3}
\draw[gp path] (7.360,5.255)--(8.276,5.255);
\draw[gp path] (3.116,5.562)--(4.037,5.224)--(4.959,4.787)--(5.880,3.737)--(6.801,2.342);
\gppoint{gp mark 4}{(3.116,5.562)}
\gppoint{gp mark 4}{(4.037,5.224)}
\gppoint{gp mark 4}{(4.959,4.787)}
\gppoint{gp mark 4}{(5.880,3.737)}
\gppoint{gp mark 4}{(6.801,2.342)}
\gppoint{gp mark 4}{(7.818,5.255)}
\gpcolor{color=gp lt color border}
\gpsetlinetype{gp lt border}
\gpsetlinewidth{1.00}
\draw[gp path] (3.116,6.513)--(3.116,0.985)--(8.644,0.985)--(8.644,6.513)--cycle;
\gpdefrectangularnode{gp plot 1}{\pgfpoint{3.116cm}{0.985cm}}{\pgfpoint{8.644cm}{6.513cm}}
\end{tikzpicture}
\phantomsubcaption\label{fig:newtmeshcomp}
\end{subfigure}
\begin{subfigure}[b]{0.49\textwidth}
\begin{tikzpicture}[gnuplot]
\gpcolor{color=gp lt color border}
\gpsetlinetype{gp lt border}
\gpsetlinewidth{1.00}
\draw[gp path] (3.116,0.985)--(3.296,0.985);
\draw[gp path] (8.644,0.985)--(8.464,0.985);
\node[gp node right] at (3.116,0.985) {$10^{-18}$};
\draw[gp path] (3.116,1.292)--(3.206,1.292);
\draw[gp path] (8.644,1.292)--(8.554,1.292);
\draw[gp path] (3.116,1.599)--(3.296,1.599);
\draw[gp path] (8.644,1.599)--(8.464,1.599);
\node[gp node right] at (3.116,1.599) {$10^{-16}$};
\draw[gp path] (3.116,1.906)--(3.206,1.906);
\draw[gp path] (8.644,1.906)--(8.554,1.906);
\draw[gp path] (3.116,2.213)--(3.296,2.213);
\draw[gp path] (8.644,2.213)--(8.464,2.213);
\node[gp node right] at (3.116,2.213) {$10^{-14}$};
\draw[gp path] (3.116,2.521)--(3.206,2.521);
\draw[gp path] (8.644,2.521)--(8.554,2.521);
\draw[gp path] (3.116,2.828)--(3.296,2.828);
\draw[gp path] (8.644,2.828)--(8.464,2.828);
\node[gp node right] at (3.116,2.828) {$10^{-12}$};
\draw[gp path] (3.116,3.135)--(3.206,3.135);
\draw[gp path] (8.644,3.135)--(8.554,3.135);
\draw[gp path] (3.116,3.442)--(3.296,3.442);
\draw[gp path] (8.644,3.442)--(8.464,3.442);
\node[gp node right] at (3.116,3.442) {$10^{-10}$};
\draw[gp path] (3.116,3.749)--(3.206,3.749);
\draw[gp path] (8.644,3.749)--(8.554,3.749);
\draw[gp path] (3.116,4.056)--(3.296,4.056);
\draw[gp path] (8.644,4.056)--(8.464,4.056);
\node[gp node right] at (3.116,4.056) {$10^{-8}$};
\draw[gp path] (3.116,4.363)--(3.206,4.363);
\draw[gp path] (8.644,4.363)--(8.554,4.363);
\draw[gp path] (3.116,4.670)--(3.296,4.670);
\draw[gp path] (8.644,4.670)--(8.464,4.670);
\node[gp node right] at (3.116,4.670) {$10^{-6}$};
\draw[gp path] (3.116,4.977)--(3.206,4.977);
\draw[gp path] (8.644,4.977)--(8.554,4.977);
\draw[gp path] (3.116,5.285)--(3.296,5.285);
\draw[gp path] (8.644,5.285)--(8.464,5.285);
\node[gp node right] at (3.116,5.285) {$10^{-4}$};
\draw[gp path] (3.116,5.592)--(3.206,5.592);
\draw[gp path] (8.644,5.592)--(8.554,5.592);
\draw[gp path] (3.116,5.899)--(3.296,5.899);
\draw[gp path] (8.644,5.899)--(8.464,5.899);
\node[gp node right] at (3.116,5.899) {$10^{-2}$};
\draw[gp path] (3.116,6.206)--(3.206,6.206);
\draw[gp path] (8.644,6.206)--(8.554,6.206);
\draw[gp path] (3.116,6.513)--(3.296,6.513);
\draw[gp path] (8.644,6.513)--(8.464,6.513);
\node[gp node right] at (3.116,6.513) {$10^{0}$};
\draw[gp path] (3.116,0.985)--(3.116,1.165);
\draw[gp path] (3.116,6.513)--(3.116,6.333);
\node[gp node center] at (3.116,0.677) { 0};
\draw[gp path] (4.037,0.985)--(4.037,1.165);
\draw[gp path] (4.037,6.513)--(4.037,6.333);
\node[gp node center] at (4.037,0.677) { 1};
\draw[gp path] (4.959,0.985)--(4.959,1.165);
\draw[gp path] (4.959,6.513)--(4.959,6.333);
\node[gp node center] at (4.959,0.677) { 2};
\draw[gp path] (5.880,0.985)--(5.880,1.165);
\draw[gp path] (5.880,6.513)--(5.880,6.333);
\node[gp node center] at (5.880,0.677) { 3};
\draw[gp path] (6.801,0.985)--(6.801,1.165);
\draw[gp path] (6.801,6.513)--(6.801,6.333);
\node[gp node center] at (6.801,0.677) { 4};
\draw[gp path] (7.723,0.985)--(7.723,1.165);
\draw[gp path] (7.723,6.513)--(7.723,6.333);
\node[gp node center] at (7.723,0.677) { 5};
\draw[gp path] (8.644,0.985)--(8.644,1.165);
\draw[gp path] (8.644,6.513)--(8.644,6.333);
\node[gp node center] at (8.644,0.677) { 6};
\draw[gp path] (3.116,6.513)--(3.116,0.985)--(8.644,0.985)--(8.644,6.513)--cycle;
\node[gp node center,rotate=-270] at (1.674,3.749) {$||\bm{r}||_2/\sqrt{N_{dof}}$};
\node[gp node center] at (5.880,0.215) {Newton-Raphson iteration};
\node[gp node center] at (5.880,6.975) {\textbf{(b)} Mesh M1 and $\alpha=0.001$};
\node[gp node right] at (7.176,6.179) {$Wi=0.1$};
\gpcolor{rgb color={0.894,0.102,0.110}}
\gpsetlinetype{gp lt plot 0}
\gpsetlinewidth{2.00}
\draw[gp path] (7.360,6.179)--(8.276,6.179);
\draw[gp path] (3.116,5.994)--(4.037,5.585)--(4.959,5.305)--(5.880,4.834)--(6.801,4.048)%
  --(7.723,2.921)--(8.644,1.767);
\gpsetpointsize{4.00}
\gppoint{gp mark 1}{(3.116,5.994)}
\gppoint{gp mark 1}{(4.037,5.585)}
\gppoint{gp mark 1}{(4.959,5.305)}
\gppoint{gp mark 1}{(5.880,4.834)}
\gppoint{gp mark 1}{(6.801,4.048)}
\gppoint{gp mark 1}{(7.723,2.921)}
\gppoint{gp mark 1}{(8.644,1.767)}
\gppoint{gp mark 1}{(7.818,6.179)}
\gpcolor{color=gp lt color border}
\node[gp node right] at (7.176,5.871) {$Wi=0.3$};
\gpcolor{rgb color={0.216,0.494,0.722}}
\gpsetlinetype{gp lt plot 1}
\draw[gp path] (7.360,5.871)--(8.276,5.871);
\draw[gp path] (3.116,5.455)--(4.037,5.149)--(4.959,4.802)--(5.880,3.858)--(6.801,2.248)%
  --(7.723,1.416);
\gppoint{gp mark 2}{(3.116,5.455)}
\gppoint{gp mark 2}{(4.037,5.149)}
\gppoint{gp mark 2}{(4.959,4.802)}
\gppoint{gp mark 2}{(5.880,3.858)}
\gppoint{gp mark 2}{(6.801,2.248)}
\gppoint{gp mark 2}{(7.723,1.416)}
\gppoint{gp mark 2}{(7.818,5.871)}
\gpcolor{color=gp lt color border}
\node[gp node right] at (7.176,5.563) {$Wi=0.6$};
\gpcolor{rgb color={1.000,0.498,0.000}}
\gpsetlinetype{gp lt plot 2}
\draw[gp path] (7.360,5.563)--(8.276,5.563);
\draw[gp path] (3.116,5.344)--(4.037,5.104)--(4.959,4.542)--(5.880,3.318)--(6.801,1.889);
\gppoint{gp mark 3}{(3.116,5.344)}
\gppoint{gp mark 3}{(4.037,5.104)}
\gppoint{gp mark 3}{(4.959,4.542)}
\gppoint{gp mark 3}{(5.880,3.318)}
\gppoint{gp mark 3}{(6.801,1.889)}
\gppoint{gp mark 3}{(7.818,5.563)}
\gpcolor{color=gp lt color border}
\node[gp node right] at (7.176,5.255) {$Wi=0.9$};
\gpcolor{rgb color={0.302,0.686,0.290}}
\gpsetlinetype{gp lt plot 3}
\draw[gp path] (7.360,5.255)--(8.276,5.255);
\draw[gp path] (3.116,5.294)--(4.037,5.068)--(4.959,4.389)--(5.880,3.039)--(6.801,1.808);
\gppoint{gp mark 4}{(3.116,5.294)}
\gppoint{gp mark 4}{(4.037,5.068)}
\gppoint{gp mark 4}{(4.959,4.389)}
\gppoint{gp mark 4}{(5.880,3.039)}
\gppoint{gp mark 4}{(6.801,1.808)}
\gppoint{gp mark 4}{(7.818,5.255)}
\gpcolor{color=gp lt color border}
\node[gp node right] at (7.176,4.947) {$Wi=1.2$};
\gpcolor{rgb color={0.596,0.306,0.639}}
\gpsetlinetype{gp lt plot 4}
\draw[gp path] (7.360,4.947)--(8.276,4.947);
\draw[gp path] (3.116,5.266)--(4.037,5.024)--(4.959,4.254)--(5.880,2.936)--(6.801,2.056);
\gppoint{gp mark 5}{(3.116,5.266)}
\gppoint{gp mark 5}{(4.037,5.024)}
\gppoint{gp mark 5}{(4.959,4.254)}
\gppoint{gp mark 5}{(5.880,2.936)}
\gppoint{gp mark 5}{(6.801,2.056)}
\gppoint{gp mark 5}{(7.818,4.947)}
\gpcolor{color=gp lt color border}
\gpsetlinetype{gp lt border}
\gpsetlinewidth{1.00}
\draw[gp path] (3.116,6.513)--(3.116,0.985)--(8.644,0.985)--(8.644,6.513)--cycle;
\gpdefrectangularnode{gp plot 1}{\pgfpoint{3.116cm}{0.985cm}}{\pgfpoint{8.644cm}{6.513cm}}
\end{tikzpicture}
\phantomsubcaption\label{fig:newtM1}
\end{subfigure}
\caption{Convergence behavior of the Newton-Raphson algorithm for different settings.}
\label{fig:newtonraphson}
\end{figure}

It is important to note that the Newton--Raphson method indeed delivers quadratic convergence up to the point
where the errors emerging from the inexact linear solution or the limited floating-point accuracy become dominant.
Fig.~\ref{fig:newtonraphson} depicts a comparative study of the residual after each Newton-Raphson iteration across
different meshes, as well as an analysis of the convergence behavior for different Weissenberg numbers. In both cases,
the residual has been evaluated in the Euclidean norm and scaled by the square root of the total number of degrees of freedom
in order to make the results comparable across different mesh sizes. As one sees in Fig.~\ref{fig:newtmeshcomp}, the convergence
history can be roughly split into three phases: In the first step, the improvement is rather moderate; at most linear convergence
was obtained. In the second phase, e.g., for the Mesh M3, one observes a relative improvement of the residual
by a factor of $18.34$ in the second step
and an improvement by a factor of $1097 > (18.34)^2$ in the third step; thus slightly exceeding quadratic convergence.
It becomes apparent from Fig.~\ref{fig:newtmeshcomp} that
this convergence is mesh-independent. The third phase is then dominated by errors introduced by the inexact solution of the
linear equation systems. This can be deduced from the results in Fig.~\ref{fig:newtmeshcomp}, where the calculations on Mesh M2
were performed using two different ILUT fill-in settings. A reduction of the ILUT
fill-in, and a therewith increased error in the solution of the linear systems, directly leads to a deterioration of the
quadratic convergence to at most asymptotically linear convergence
\cite{Kelley1995}. Furthermore, in the last steps the convergence is limited by the fact
that a residual far beneath $10^{-16}$ is in general not attainable due to the
floating-point arithmetic used.

In Fig.~\ref{fig:newtM1},
one notices that, using the result obtained for the previously calculated Weissenberg number as an initial guess for the
subsequent calculation, the convergence progression is similar across the consecutive runs. Only the starting point $Wi=0.1$
does not fully fit into this picture, which on the one hand has to be attributed to the circumstance that for this case the
initial guess was set to zero in the interior of the computational domain,
and on the other hand is a consequence of the derivative of $\tau_{cons} (\bm{u}^h \cdot\nabla) \bm{\Phi}^h$
in the discretized weak form being neglected (cf. Section~\ref{sec:linearization}). The latter is a remedy for the fact that without these
additional terms, the iterative scheme seems to be more robust with regard to the choice of the initial guess.

\subsection{Sedimenting ellipsoid}

\begin{figure}[t]
\begin{center}
\includegraphics[width=\textwidth]{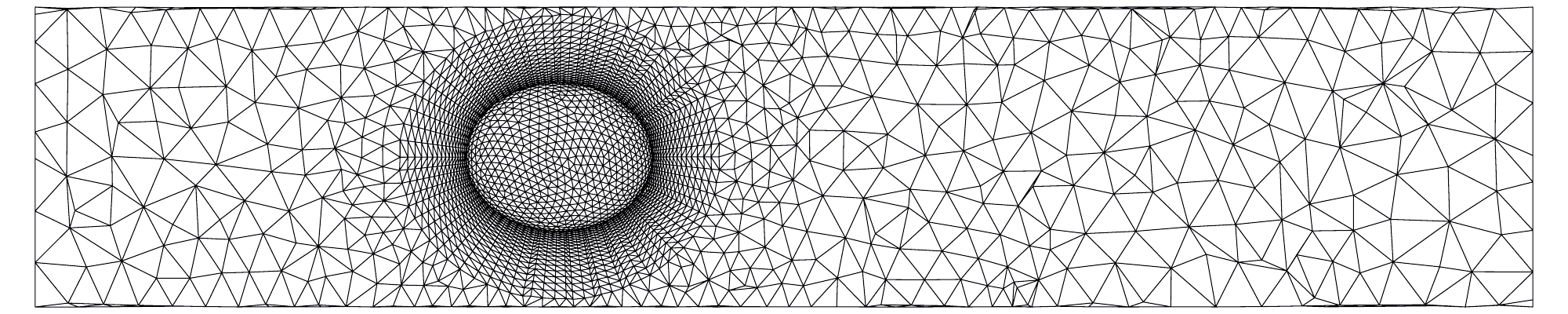}
\end{center}
\caption{Cut through the $xy$-plane of Mesh M4, as used in the calculations with an ellipsoid.}
\label{fig:cutmeshm4}
\end{figure}

\begin{table}[t]
\footnotesize
\centering
\begin{tabular}{lc}
\toprule
 & M4 \\
\midrule
Number of elements on the ellipsoid & 2706 \\
Total number of nodes & 218663 \\
Total number of elements & 158353 \\
Krylov-space dimension & 300 \\
ILUT maximal fill-in $n_{ILUT}$ & 200 \\
ILUT threshold & $10^{-4}$ \\
Number of cores & 256\\
\bottomrule
\end{tabular}
\caption{Mesh and solver attributes used for the sedimenting ellipsoid benchmark.}
\label{tab:meshproperties_ellipsoid}
\end{table}

In order to demonstrate the applicability of the proposed method to a truly three-dimensional problem,
a case similar to the sedimenting sphere benchmark is considered, but with the sphere replaced by a tri-axial ellipsoid. The latter was
chosen with the semi-principal axes aligned to the coordinate axes. The length of the axes in $x$,$y$, and $z$ direction
were set to $a=1.25\, R$, $b=1.0\, R$, and $c=0.8\, R$, respectively, such that in accordance with the findings in \cite{Huang1998},
the semi-major axis coincides with the main flow direction for $Re=0$. The tube radius was kept as $2R$.

The mesh, as depicted in Fig.~\ref{fig:cutmeshm4}, was chosen similar to the Mesh M2 in the sedimenting sphere benchmark,
which already provided a good
trade-off between computational cost and accuracy in the drag computation. Therefore, the GMRES/ILUT parameters were also
chosen accordingly, as can be seen in Tab.\ref{tab:meshproperties_ellipsoid}.

\begin{table}[t]
\footnotesize
\centering
\begin{tabular}{ccccc}
\toprule
\multirow{2}[3]{*}{$Wi$} & \multicolumn{4}{c}{$K$}\\
\cmidrule(lr){2-5}
 & $\alpha=0$ & $\alpha=0.001$ & $\alpha=0.01$ & $\alpha=0.1$ \\
\midrule
0.1 & 4.90847 & 4.90782 & 4.90211 & 4.85331\\
0.2 & 4.85959 & 4.85715 & 4.83621 & 4.68884\\
0.3 & 4.79819 & 4.79305 & 4.75092 & 4.50990\\
0.4 & 4.73648 & 4.72791 & 4.66130 & 4.34532\\
0.5 & 4.68082 & 4.66810 & 4.57513 & 4.20056\\
0.6 & 4.63365 & 4.61608 & 4.49540 & 4.07438\\
0.7 & 4.59537 & 4.57217 & 4.42252 & 3.96425\\
0.8 & 4.56535 & 4.53566 & 4.35581 & 3.86772\\
0.9 & 4.54252 & 4.50538 & 4.29432 & 3.78269\\
1.0 & 4.52568 & 4.48007 & 4.23718 & 3.70739\\
1.1 & 4.51370 & 4.45854 & 4.18372 & 3.64038\\
1.2 & 4.50554 & 4.43976 & 4.13345 & 3.58045\\
1.3 & 4.50034 & 4.42286 & 4.08604 & 3.52658\\
1.4 & 4.49738 & 4.40717 & 4.04124 & 3.47795\\
1.5 &               & 4.39215 & 3.99883 & 3.43385\\
1.6 &               & 4.37747 & 3.95867 & 3.39369\\
1.8 &               &               & 3.88450 & 3.32332\\
2.0 &               &               & 3.81772 & 3.26370\\
$\vdots$ & & & $\vdots$ & $\vdots$ \\
10.0&              &               & 3.01520 & 2.71617\\
11.0&              &               & 2.98506 & 2.69985\\
12.0&              &               & 2.95902 & 2.68597\\
13.0&              &               & 2.93626 & 2.67400\\
14.0&              &               & 2.91617 & 2.66356\\
15.0&              &               & 2.89827 & 2.65437\\
\bottomrule
\end{tabular}
\caption{Results for the correction factor $K$ of the drag on the ellipsoid.}
\label{tab:dragellipsoid}
\end{table}

Our main objective of the investigation was the drag correction factor $K$, where the latter has been defined for the sake of simplicity
as in the case of the falling sphere, cf. Eq.~\eqref{eqn:dragcorrection}. Nonetheless,
the Stokesian drag formula can be generalized to ellipsoids in principle \cite{Lamb1932}. The results in Tab.~\ref{tab:dragellipsoid}
confirm the general trend of the simulations with the spherical geometry:
The drag decreases monotonically with increasing Weissenberg number.
It can also be stated that the general drag level is below the drag levels obtained in the simulations
with a sphere as obstacle, which may be attributed to the reduced cross section. With increasing $\alpha$,
higher Weissenberg numbers can be attained, and the effect of reduced drag due to increased shear-thinning becomes visible.

\section{Conclusion and discussion}
The main objective of this paper was to derive a log-conformation formulation that on the one hand inherits the stability properties
of the originally proposed log-conformation formulation \cite{Fattal2004}, but on the other hand also paves the way for an application of Newton's method
in numerical simulations. Furthermore, we especially sought a description that could be applied in three
dimensions with the same ease as the previously published two-dimensional approaches \cite{Knechtges2014,Saramito2014}.

To demonstrate the numerical benefit of this approach,
we implemented a proof-of-concept three-dimensional finite element solver and subsequently
tested it by means of the sedimenting sphere and ellipsoid benchmarks.
The simulations exhibited the best-possible convergence properties of quadratic-convergence.

Since the new constitutive equations are just a rewording of the original
log-conformation equations, the proposed formulation cannot further improve the stability.
As such, we were not able to obtain results
beyond a Weissenberg number of $Wi=1.4$ for a sphere sedimenting through an Oldroyd-B fluid. Since switching to the Giesekus model removed
this limitation, the characteristic behavior of the Oldroyd-B fluid in extensional flow regimes might be the underlying
reason for this restriction.

In addition to the just-mentioned advantages for the numerical application, our formulation
is intrinsically defined in an undiscretized setting, which may reveal new perspectives on the analytical properties
of the used constitutive models in the future.
In particular, the seamless incorporation of the so-called free-energy estimates in the log-conformation formulation \cite{Boyaval2009}
and their application to the global-in-time existence of solutions \cite{Masmoudi2011,Barrett2011} may give new insights.

\section{Acknowledgments}
The author gratefully acknowledges support from the German Research Foundation (DFG) grant
”Computation of Die Swell Behind a Complex Profile Extrusion Die Using a Stabilized Finite Element Method for Various Thermoplastic Polymers”
and the DFG program GSC 111 (AICES Graduate School).
The computations were conducted on computing clusters provided by the Jülich Aachen Research Alliance (JARA). Furthermore, I want to thank
Marek Behr, Stefanie Elgeti, and Stefan Haßler for their indispensable remarks during the preparation of the manuscript.

\appendix

\bibliographystyle{elsarticle-num}
\bibliography{references}


\end{document}